\providecommand{\U}[1]{\protect\rule{.1in}{.1in}}
\theoremstyle{definition}
\newtheorem{xx}{xx}[section]
\theoremstyle{plain}
\newtheorem{corollary}[xx]{Corollary}
\theoremstyle{remark}
\theoremstyle{definition}
\newtheorem{definition}[xx]{Definition}
\newtheorem{example}[xx]{Example}
\theoremstyle{remark}
\theoremstyle{plain}
\newtheorem{lemma}[xx]{Lemma}
\theoremstyle{remark}
\theoremstyle{plain}
\newtheorem{proposition}[xx]{Proposition}
\theoremstyle{remark}
\newtheorem{remark}[xx]{Remark}
\theoremstyle{plain}
\newtheorem{theorem}[xx]{Theorem}
\numberwithin{equation}{xx}
\numberwithin{equation}{section}
\DeclareMathOperator{\der}{Der}
\DeclareMathOperator{\enm}{End}
\DeclareMathOperator{\hmm}{Hom}
\DeclareMathOperator{\ic}{ic}
\begin{document}
\title[Lie-Rinehart cohomology]{Lie-Rinehart cohomology and integrable connections on modules of rank one}
\author{Eivind Eriksen}
\address{Oslo University College}
\email{eeriksen@hio.no}
\author{Trond St\o len Gustavsen}
\address{Norwegian School of Management}
\email{trond.s.gustavsen@bi.no}
\date{\today}

\begin{abstract}
Let $k$ be an algebraically closed field of characteristic $0$, let $R$ be a
commutative $k$-algebra, and let $M$ be a torsion free $R$-module of rank one
with a connection $\nabla$. We consider the Lie-Rinehart cohomology with
values in $\enm_{R}(M)$ with its induced connection, and give an
interpretation of this cohomology in terms of the integrable connections on
$M$. When $R$ is an isolated singularity of dimension $d\geq2$, we relate the
Lie-Rinehart cohomology to the topological cohomology of the link of the
singularity, and when $R$ is a quasi-homogenous hypersurface of dimension two,
we give a complete computation of the cohomology.

\end{abstract}
\maketitle

\section{Introduction}

Rinehart introduced Lie-Rinehart cohomology in Rinehart \cite{Rin:63} as a
generalization of de Rham cohomology. Later, Lie-Rinehart cohomology has been
considered by several authors, see for instance Huebschmann \cite{Hue:99} and
Casas, Ladra and Pirashvili \cite{CasLadPir:05}. In singularity theory, Huang,
Luk and Yau \cite{HuaLukYau:03} studied the so-called punctured de Rham
cohomology, and although it is not mentioned in their paper, it turns out that
this cohomology coincides with the Lie-Rinehart cohomology.

The purpose of this paper is to study the Lie Rinehart cohomology when $R$ is
a representative of an isolated singularity, and to interpret this cohomology
in terms of integrable connections on $R$-modules of rank one. The emphasis is
on explicit results and examples.

Let $k$ be an algebraically closed field of characteristic $0$, let $R$ be a
commutative $k$-algebra and let $M$ be a torsion free $R$-module of rank one
with a (not necessarily integrable) connection $\nabla:\der_{k}(R)\rightarrow
\enm_{k}(M).$ We consider the Lie-Rinehart cohomology $\operatorname{H}%
_{\operatorname{Rin}}^{\ast}(\der_{k}(R),\enm_{R}(M))$ where $\enm_{R}(M)$ has
the (integrable) connection induced by $\nabla.$ We give the following
interpretation of the Lie-Rinehart cohomology in terms of integrable connections:

\vspace{0.2cm}

\noindent\textbf{Theorem A. }\textit{Let }$R$\textit{ be a complete reduced
local }$k$\textit{-algebra and let }$M$\textit{ be a torsion free }%
$R$\textit{-module of rank one. Then we have:}

\begin{enumerate}
\item \textit{There is a canonical obstruction class }$\ic(M)\in
\operatorname{H}_{\operatorname{Rin}}^{2}(\der_{k}(R),\enm_{R}(M))$\textit{
which vanishes if and only if there is an integrable connection on }$M.$

\item \textit{If }$\ic(M)$\textit{ vanishes, then }$\operatorname{H}%
_{\operatorname{Rin}}^{1}(\der_{k}(R),\enm_{R}(M))$\textit{ is the moduli
space of integrable connections on }$M$\textit{ up to equivalence.}
\end{enumerate}

\vspace{0.2cm}

\noindent Using a spectral sequence, we relate $\operatorname{H}%
_{\operatorname{Rin}}^{\ast}(\der_{k}(R),\enm_{R}(M))$ to the topological
cohomology of the link of the singularity:\newline

\vspace{0.2cm}

\noindent\textbf{Theorem B. }\textit{Let }$R$\textit{ be a finitely generated
Cohen-Macaulay domain over }$\mathbb{C}$\textit{ of dimension }$d\geq
2$\textit{ with a unique isolated singularity }$x\in X=\operatorname{Spec}%
(R).$\textit{ Then }$\enm_{R}(M)\cong R$ and \textit{there is a natural exact
sequence}%
\[
0\rightarrow\operatorname{H}_{\operatorname{Rin}}^{1}(\operatorname{Der}%
(R),R)\rightarrow\operatorname{H}^{1}(U_{\operatorname{an}},\mathbb{C}%
)\rightarrow E_{2}^{0,1}\rightarrow\operatorname{H}_{\operatorname{Rin}}%
^{2}(\operatorname{Der}(R),R)\rightarrow\operatorname{H}^{2}%
(U_{\operatorname{an}},\mathbb{C})
\]
\textit{where }$E_{2}^{0,1}=\ker(\operatorname{H}^{1}(U,\mathcal{O}%
_{X})\rightarrow\operatorname{H}^{1}(U,\Omega_{X}^{1}))$\textit{ and
}$U=X\setminus\{x\}$\textit{. Moreover, if }$d\geq3,$\textit{ then
}$\operatorname{H}_{\operatorname{Rin}}^{1}(\operatorname{Der}(R),R)\cong%
\operatorname{H}^{1}(U_{\operatorname{an}},\mathbb{C})$\textit{ and
}$\operatorname{H}_{\operatorname{Rin}}^{2}(\operatorname{Der}%
(R),R)\hookrightarrow\operatorname{H}^{2}(U_{\operatorname{an}},\mathbb{C}).$

\vspace{0.2cm} We give an example to show that $\operatorname{H}%
_{\operatorname{Rin}}^{1}(\operatorname{Der}(R),R)$ and $\operatorname{H}%
^{1}(U_{\operatorname{an}},\mathbb{C})$ are not in general isomorphic in
dimension $d=2$. \ To further clarify the case $d=2,$ we show the following result:

\vspace{0.2cm}

\noindent\textbf{Theorem C.} \textit{Let }$R=k[x_{1},x_{2},x_{3}]/(f)$\textit{
be an integral quasi-homogenous surface singularity. The grading on }%
$R$\textit{ induces a grading on }$\operatorname{H}_{\operatorname{Rin}}%
^{\ast}(\operatorname{Der}_{k}(R),R),$\textit{ and }%
\begin{align*}
\operatorname{H}_{\operatorname{Rin}}^{0}(\operatorname{Der}_{k}(R),R)  &
=\operatorname{H}_{\operatorname{Rin}}^{0}(\operatorname{Der}_{k}%
(R),R)_{0}\cong R_{0}=k\\
\operatorname{H}_{\operatorname{Rin}}^{i}(\operatorname{Der}_{k}(R),R)  &
=\operatorname{H}_{\operatorname{Rin}}^{i}(\operatorname{Der}_{k}%
(R),R)_{0}\cong R_{d-d_{1}-d_{2}-d_{3}}\text{ \textit{for} }i=1,2\\
\operatorname{H}_{\operatorname{Rin}}^{i}(\operatorname{Der}_{k}(R),R)  &
=0\text{ \textit{for} }i\geq3,
\end{align*}
\textit{as graded }$k$\textit{-vector space, where }$d_{i}=\deg x_{i}$\textit{
for }$i=1,2,3$\textit{.}

\vspace{0.2cm}

\noindent In particular, we have that $\operatorname{H}_{\operatorname{Rin}%
}^{i}(\operatorname{Der}_{k}(R),R)\cong R_{d-1-1-1}=R_{d-3}$\ when $R$ is the
cone over a plane curve, so that%
\[
\dim_{\mathbb{C}}\operatorname{H}_{\operatorname{Rin}}^{i}(\operatorname{Der}%
_{k}(R),R)=\frac{(d-1)(d-2)}{2}%
\]
is the genus of the curve $V(f)$ in $\mathbb{P}^{2}$ for $i=1,2.$

In Huang, Luk and Yau \cite{HuaLukYau:03}, the punctured local de Rham
cohomology $H_{h}^{i}(V,x)$ of a germ of a complex analytic space was studied.
When $(V,x)$ is a hypersurface singularity of dimension $d\geq2$ with local
ring $R,$ $H_{h}^{i}(V,x)$ coincides with $\operatorname{H}%
_{\operatorname{Rin}}^{i}(\operatorname{Der}_{\mathbb{C}}(R),R)$ for $i\geq1,$
and therefore
\begin{align*}
&  \dim_{\mathbb{C}}\operatorname{H}_{\operatorname{Rin}}^{i}%
(\operatorname{Der}_{\mathbb{C}}(R),R)=0\text{ for }1\leq i\leq d-2,\text{
and}\\
&  \dim_{\mathbb{C}}\operatorname{H}_{\operatorname{Rin}}^{d}%
(\operatorname{Der}_{\mathbb{C}}(R),R)-\dim_{\mathbb{C}}\operatorname{H}%
_{\operatorname{Rin}}^{d-1}(\operatorname{Der}_{\mathbb{C}}(R),R)=\mu-\tau
\end{align*}
where $\mu$ is the Milnor number and $\tau$ is the Tjurina number of the
singularity. In particular, $\dim_{\mathbb{C}}\operatorname{H}%
_{\operatorname{Rin}}^{2}(\operatorname{Der}_{\mathbb{C}}(R),R)=\dim
_{\mathbb{C}}\operatorname{H}_{\operatorname{Rin}}^{1}(\operatorname{Der}%
_{\mathbb{C}}(R),R)$, when $R$ is a quasi-homogenous surface, and this is in
agreement with our Theorem C.

We also prove that if $R$ is a curve, then any connection on a torsion free
$R$-module (of any rank) is integrable. Moreover, if $R=k[\Gamma]$ is an
affine monomial curve and $M$ is a graded torsion free $R$-module of rank one
with a connection $\nabla$, then $\operatorname{H}_{\operatorname{Rin}}%
^{i}(\operatorname{Der}_{k}(R),M,\nabla)=0$ for $i\geq1$.

\section{Basic definitions}

Let $k$ be an algebraically closed field of characteristic $0$, and let $R$ be
a commutative $k$-algebra. A \emph{Lie-Rinehart algebra} of $R/k$ is a pair
$(\mathsf{g},\tau)$, where $\mathsf{g}$ is an $R$-module and a $k$-Lie
algebra, and $\tau:\mathsf{g}\rightarrow\der_{k}(R)$ is a morphism of
$R$-modules and $k$-Lie algebras, such that
\[
\lbrack D,rD^{\prime}]=r[D,D^{\prime}]+\tau_{D}(r)\;D^{\prime}%
\]
for all $D,D^{\prime}\in\mathsf{g}$ and all $r\in R$, see Rinehart
\cite{Rin:63}. A Lie-Rinehart algebra is the algebraic analogue of a \emph{Lie
algebroid}, and it is also known as a Lie pseudo-algebra or a Lie-Cartan pair.

When $\mathsf{g}$ is a subset of $\der_{k}(R)$ and $\tau:\mathsf{g}%
\rightarrow\der_{k}(R)$ is the inclusion map, the pair $(\mathsf{g},\tau)$ is
a Lie-Rinehart algebra if and only if $\mathsf{g}$ is closed under the
$R$-module and $k$-Lie algebra structures of $\der_{k}(R)$. We are mainly
interested in Lie-Rinehart algebras of this type, and we will usually omit
$\tau$ from the notation.

Let $\mathsf{g}$ be a Lie-Rinehart algebra. For any $R$-module $M$, we define
a \emph{$\mathsf{g}$-connection} on $M$ to be an $R$-linear map $\nabla
:\mathsf{g}\rightarrow\enm_{k}(M)$ such that
\begin{equation}
\nabla_{D}(rm)=r\nabla_{D}(m)+D(r)\;m \label{e:dp}%
\end{equation}
for all $D\in\mathsf{g},\;r\in R,\;m\in M$. A \emph{connection} on $M$ is a
$\mathsf{g}$-connection on $M$ with $\mathsf{g}=\der_{k}(R)$.

If $\nabla$ is a $\mathsf{g}$-connection on $M$, then any $\mathsf{g}%
$-connection on $M$ is given by $\nabla+ P$, where $P \in\hmm_{R}%
(\mathsf{g},\enm_{R}(M))$. The linear maps in $\hmm_{R}(\mathsf{g}%
,\enm_{R}(M))$ are called \emph{potentials}.

Let $\nabla$ be a $\mathsf{g}$-connection on $M$. We define the
\emph{curvature} of $\nabla$ to be the $R$-linear map $K_{\nabla}%
:\mathsf{g}\wedge\mathsf{g}\rightarrow\enm_{R}(M)$ given by
\[
K_{\nabla}(D\wedge D^{\prime})=[\nabla_{D},\nabla_{D^{\prime}}]-\nabla
_{\lbrack D,D^{\prime}]}%
\]
for all $D,D^{\prime}\in\mathsf{g}$. We say that $\nabla$ is an
\emph{integrable $\mathsf{g}$-connection} if $K_{\nabla}=0$.

Let $\mathsf{g}$ be a Lie-Rinehart algebra of $R/k$, and let $(M,\nabla)$ be
an $R$-module with an integrable $\mathsf{g}$-connection. Define
\[
\operatorname{C}\nolimits_{\operatorname{Rin}}^{n}(\mathsf{g},M,\nabla
)=\operatorname*{Hom}\nolimits_{R}(\wedge_{R}^{n}\mathsf{g},M)\text{ \ for
}n\geq0,
\]
and define%
\[
d^{n}:\operatorname*{C}\nolimits_{\operatorname*{Rin}}^{n}(\mathsf{g}%
,M,\nabla)\rightarrow\operatorname*{C}\nolimits_{\operatorname*{Rin}}%
^{n+1}(\mathsf{g},M,\nabla)
\]
by%
\begin{multline*}
d^{n}(\xi)(D_{0}{\wedge}\cdots{\wedge}D_{n})={\sum_{i=0}^{n}(-1)^{i}%
\nabla_{D_{i}}(\xi(D_{0}{\wedge}\cdots{\wedge}\widehat{D}_{i}{\wedge}%
\cdots{\wedge}D_{n}))+}\\
{\sum_{0\leq j<k\leq n}(-1)^{j+k}\xi([D_{j},D_{k}]{\wedge}D_{0}{\wedge}%
\cdots{\wedge}\widehat{D}_{j}{\wedge}\cdots{\wedge}\widehat{D}_{k}{\wedge
}\cdots{\wedge}D_{n})}%
\end{multline*}
Since $K_{\nabla}=0$, one verifies that $\operatorname*{C}%
\nolimits_{\operatorname*{Rin}}^{\ast}(\mathsf{g},M,\nabla)$ is a complex of
$k$-vector spaces; notice that the maps $d^{n}$ are not $R$-linear. The
\emph{Lie-Rinehart cohomology} $\operatorname{H}_{\operatorname{Rin}}^{\ast
}(\mathsf{g},M,\nabla)$ is defined to be the cohomology of the complex
$\operatorname{C}\nolimits_{\operatorname{Rin}}^{\ast}(\mathsf{g},M,\nabla),$
see Rinehart \cite{Rin:63}.

The Lie-Rinehart cohomology is also referred to as the Rinehart cohomology and
as Chevalley-Hochschild cohomology. Note that the definition of the
Lie-Rinehart cohomology generalizes the definition of Chevalley-Eilenberg
cohomology (when $R=k$) and deRham cohomology (when $R$ is regular and
$\mathsf{g}=\der_{k}(R)$). For further properties of Lie-Rinehart cohomology,
see Huebschmann \cite{Hue:99} and Casas, Ladra and Pirashvili
\cite{CasLadPir:05}.

\section{Connections and cohomology}

Assume that $R$ is a reduced noetherian $k$-algebra, and that $M$ is a rank
one torsion free finitely generated $R$-module. In this section, we relate the
set of integrable connections on $M$ to Lie-Rinehart cohomology.

Let $S\subset R$ be the set of regular elements, and let $Q=Q(R)=S^{-1}R$ be
the total ring of fractions. Then by assumption, $M\rightarrow M\otimes_{R}Q$
is injective and $M\otimes_{R}Q$ is a free $Q$-module of rank one. Moreover,
fixing an embedding $M\subseteq Q,$ we identify $\overline{R}:=\enm_{R}(M)$ as
$\{\psi\in Q:\psi M\subseteq M\}.$ We view the $R$-algebra $\overline{R}$ as a
\emph{commutative} extension of $R$ with $R\subseteq\overline{R}\subseteq Q.$

Let $\mathsf{g}$ be a Lie-Rinehart algebra and assume that there is a
$\mathsf{g}$-connection $\nabla$ on $M$. Then there is an induced $\mathsf{g}%
$-connection $\overline{\nabla}$ on $\overline{R}=\enm_{R}(M)$, given by
$\overline{\nabla}_{D}(\phi)=\nabla_{D}\,\phi-\phi\nabla_{D}$ for
$D\in\mathsf{g}$ and $\phi\in\enm_{R}(M).$

\begin{proposition}
Let $\nabla$ be a (not necessesarly integrable) $\mathsf{g}$-connection on
$M.$ \label{end_con}

\begin{enumerate}
\item The induced $\mathsf{g}$-connection $\overline{\nabla}:\mathsf{g}%
\rightarrow\enm_{k}(\overline{R})$ is given by $\overline{\nabla}_{D}%
(\phi)=D(\phi)$ for $\phi\in\overline{R},$ where $\phi$ is identified with an
element in $Q$ and $D$ is extended to $Q.$ In particular, $\overline{\nabla}$
is an integrable connection that is independent of $\nabla.$

\item If $R$ is normal, then $\overline{R}=R,$ and $\overline{\nabla}$ is the
action $\tau:\mathsf{g}\rightarrow$ $\der_{k}(R)$.
\end{enumerate}
\end{proposition}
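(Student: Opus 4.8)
The plan is to prove both parts by exploiting the embedding $M \subseteq Q$ and the identification $\overline{R} = \{\psi \in Q : \psi M \subseteq M\}$. The central idea is that since $\overline{R}$ lives inside $Q$ and consists of multiplication operators, the induced connection $\overline{\nabla}$, though defined abstractly as a commutator, should simplify to the natural derivation action once we extend the derivation $D$ to the fraction field.

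Let me sketch the proof of part (1) first.

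\begin{proof}[Proof of Proposition \ref{end_con}]
First I would extend any derivation $D \in \mathsf{g} \subseteq \der_k(R)$ to a derivation of $Q = S^{-1}R$ by the quotient rule; this extension is unique since localization is an epimorphism of rings, and it restricts to a derivation of $\overline{R}$ because $\overline{R}$ is a commutative $R$-subalgebra of $Q$ (any element $\phi \in \overline{R}$ satisfies a relation over $R$, and $D$ preserves $\overline{R}$ as it lies between $R$ and its integral closure). The key computation is to unwind the definition $\overline{\nabla}_D(\phi) = \nabla_D \circ \phi - \phi \circ \nabla_D$, viewing $\phi \in \overline{R} = \enm_R(M)$ as multiplication by an element of $Q$. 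For $m \in M$, I would compute
\[
\overline{\nabla}_D(\phi)(m) = \nabla_D(\phi m) - \phi \nabla_D(m) = D(\phi)\, m,
\]
where the second equality uses the Leibniz rule (\ref{e:dp}) for $\nabla$ applied to the product $\phi m$, noting that $\phi m \in M$ so $\nabla_D(\phi m)$ makes sense. This shows $\overline{\nabla}_D(\phi)$ is multiplication by $D(\phi)$, i.e. $\overline{\nabla}_D(\phi) = D(\phi)$ under the identification of $\overline{R}$ with a subring of $Q$. Since the right-hand side $D(\phi)$ makes no reference to $\nabla$, the connection $\overline{\nabla}$ is independent of the choice of $\nabla$. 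Integrability then follows because $\overline{\nabla}$ is simply the restriction of the canonical flat connection on $Q$ given by the derivation action: explicitly, $K_{\overline{\nabla}}(D \wedge D') = \overline{\nabla}_D \overline{\nabla}_{D'} - \overline{\nabla}_{D'} \overline{\nabla}_D - \overline{\nabla}_{[D,D']}$ acts on $\phi$ as $D(D'(\phi)) - D'(D(\phi)) - [D,D'](\phi) = 0$ by the definition of the Lie bracket of derivations.

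For part (2), if $R$ is normal then $R$ is integrally closed in $Q$; since $\overline{R} = \enm_R(M)$ consists of elements of $Q$ that are integral over $R$ (each $\phi \in \overline{R}$ stabilizes the finitely generated $R$-module $M$, hence satisfies a monic polynomial over $R$ by the determinant trick), we get $\overline{R} = R$. Under this identification the formula from part (1) reads $\overline{\nabla}_D(r) = D(r)$ for $r \in R$, which is precisely the action $\tau: \mathsf{g} \to \der_k(R)$.
\end{proof}

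The main obstacle I anticipate is the careful justification that $D$ extends to $\overline{R}$ and that the Leibniz computation is valid when $\phi$ is not in $R$ but only in $Q$; one must ensure that $\nabla_D(\phi m)$ is interpreted correctly (with $\phi m \in M$) rather than trying to apply $\nabla$ to elements of $Q \setminus M$. The integrality argument in part (2) is standard but should be stated, since it is what forces $\overline{R} = R$.
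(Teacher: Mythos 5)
Your computation of $\overline{\nabla}_D(\phi)(m)$ has a genuine gap at its only nontrivial step: the Leibniz rule (\ref{e:dp}) is an axiom for scalars $r\in R$ only, and you apply it with the scalar $\phi\in\overline{R}$, which in general does not lie in $R$. The identity $\nabla_D(\phi m)=\phi\nabla_D(m)+D(\phi)\,m$ for $\phi\in\overline{R}$ is not merely an ``interpretation'' issue, as your closing remark suggests; it is literally equivalent to the assertion $\overline{\nabla}_D(\phi)=D(\phi)$ that part (1) claims, so as written the argument is circular. This is exactly where the rank-one, torsion-free hypotheses must do real work, and your proof never actually exploits them beyond quoting the identification $\overline{R}\subseteq Q$.

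The gap can be closed in two ways. The paper's route: by the honest Leibniz rule for $r\in R$, the difference $\nabla_D-D$ (both viewed as maps from $M\subseteq Q$ into $Q$, after extending $D$ to $Q$) is $R$-linear; since $M$ is torsion free of rank one, $\hmm_R(M,Q)\cong Q$, so $\nabla_D=D+\psi_D$ for some $\psi_D\in Q$. Then $\overline{\nabla}_D(\phi)=[\nabla_D,\phi]=[D+\psi_D,\phi]=[D,\phi]=D(\phi)$, because multiplication operators by elements of the commutative ring $Q$ commute with one another. Alternatively, clear denominators: write $\phi=a/s$ with $a\in R$ and $s$ regular; from $s(\phi m)=am$ and two legitimate applications of (\ref{e:dp}) one gets $s\nabla_D(\phi m)=a\nabla_D(m)+D(a)m-D(s)\phi m$, and dividing by $s$ inside $Q$ (allowed because $M\rightarrow M\otimes_R Q$ is injective) yields the extended Leibniz rule. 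Either argument supplies the missing step; the rest of your proposal is sound, including the flatness/independence of $\overline{\nabla}$ once the formula is known, and the integrality argument for part (2), which matches the paper. Incidentally, you do not need any Seidenberg-type theorem to see that $D$ preserves $\overline{R}$: the containment $D(\phi)\in\overline{R}$ falls out of the computation for free, since $[\nabla_D,\phi]$ is visibly an $R$-endomorphism of $M$.
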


\begin{proof}
We first prove (1). An element $D\in\mathsf{g}$ has a lifting to a derivation
on $Q$ (which we also denote by $D$). For any connection $\nabla$ on $M$, we
may consider $\nabla_{D}$ and $D$ as maps from $M\subseteq Q$ into $Q.$ Then
$\nabla_{D}-D$ is in $\operatorname*{Hom}_{R}(M,Q)$, so that $\nabla
_{D}=D+\psi_{D}$ for some $\psi_{D}\in Q$. A calculation shows that
\begin{align*}
\overline{\nabla}_{D}(\phi)  &  =(\nabla_{D}\phi-\phi\nabla_{D})=(D+\psi
_{D})\phi-\phi(D+\psi_{D})\\
&  =D\phi-\phi D=D(\phi)
\end{align*}
where we consider $\phi$ as an element in $Q.$ In other words, $\mathsf{g}$
acts on $\overline{R}$ through $\overline{\nabla}$ by extending the action of
$\mathsf{g}$ to $Q.$

To prove (2), note that $\overline{R}$ $\subseteq Q$ is integral over $R$
since $\overline{R}$ is a finitely generated $R$-module. By assumtion, $R$ is
normal, hence $R=\overline{R}.$
\end{proof}

With assumptions as above and with $\overline{\nabla}$ as in the proposition,
we will consider the Lie-Rinehart cohomology groups $\operatorname{H}%
_{\operatorname{Rin}}^{n}(\mathsf{g},\overline{R})=\operatorname{H}%
_{\operatorname{Rin}}^{n}(\mathsf{g},\overline{R},\overline{\nabla})$ of
$(\overline{R},\overline{\nabla}).$ We shall give an interpretation of these
cohomology groups for $n=1$ and $n=2$:

\begin{proposition}
\label{int_class}If $M$ admits a $\mathsf{g}$-connection, then there is a
canonical class
\[
\ic(M)\in\operatorname{H}_{\operatorname{Rin}}^{2}(\mathsf{g},\overline{R})
\]
called the integrability class, such that $\ic(M)=0$ if and only if $M$ admits
an integrable $\mathsf{g}$-connection.
\end{proposition}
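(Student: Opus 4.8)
The plan is to define $\ic(M)$ as the cohomology class of the curvature of an arbitrarily chosen $\g$-connection, and then to check that this class is well defined and detects integrability. By hypothesis the set of $\g$-connections on $M$ is nonempty, and as noted in the text it is a torsor under the potentials $\hmm_R(\g,\enm_R(M))=\hmm_R(\g,\overline R)=\operatorname{C}^1_{\operatorname{Rin}}(\g,\overline R)$: every $\g$-connection has the form $\nabla+P$ with $P$ a potential. For any $\g$-connection $\nabla$ the curvature $K_\nabla$ is $R$-linear in each argument, so $K_\nabla\in\hmm_R(\wedge_R^2\g,\overline R)=\operatorname{C}^2_{\operatorname{Rin}}(\g,\overline R)$; and by Proposition~\ref{end_con}(1) the induced connection $\overline\nabla$ is integrable, so $\operatorname{C}^*_{\operatorname{Rin}}(\g,\overline R,\overline\nabla)$ is a genuine complex and it is meaningful to ask whether $K_\nabla$ is a cocycle.

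First I would fix an embedding $M\subseteq Q$ and write $\nabla_D=D+\psi_D$ with $\psi_D\in Q$, exactly as in the proof of Proposition~\ref{end_con}; since $\nabla$ is $R$-linear in $D$, the assignment $\psi\colon D\mapsto\psi_D$ is an element of $\hmm_R(\g,Q)$, i.e. a $1$-cochain in the complex $\operatorname{C}^*_{\operatorname{Rin}}(\g,Q)$ equipped with the integrable $Q$-connection $D$. A short computation in the commutative ring $Q$, using $[D,\psi_{D'}]=D(\psi_{D'})$ and $[\psi_D,\psi_{D'}]=0$, shows that
\[
K_\nabla(D\wedge D')=D(\psi_{D'})-D'(\psi_D)-\psi_{[D,D']}=d^1(\psi)(D\wedge D'),
\]
so that $K_\nabla=d^1(\psi)$ as a $Q$-valued $2$-cochain. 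Two things follow. Since $\operatorname{C}^*_{\operatorname{Rin}}(\g,\overline R,\overline\nabla)$ is a subcomplex of the $Q$-valued complex (because $D$ preserves $\overline R$ by Proposition~\ref{end_con}(1)) on which the two differentials agree, and since $K_\nabla$ takes values in $\overline R$, we get $d^2(K_\nabla)=d^2d^1(\psi)=0$, so $K_\nabla$ is a $2$-cocycle. The essential point here, and the crux of the argument, is the commutativity of $\overline R=\enm_R(M)$ (which holds precisely because $M$ is torsion free of rank one): it is what kills the quadratic term $[\psi_D,\psi_{D'}]$ and makes $K_\nabla$ a coboundary in the ambient $Q$-complex.

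Now I would set $\ic(M):=[K_\nabla]\in\hrin{2}(\g,\overline R)$ and check it is independent of $\nabla$. If $\nabla'=\nabla+P$ for a potential $P\in\hmm_R(\g,\overline R)$, then $\nabla'_D=D+(\psi_D+P_D)$, so by the displayed formula $K_{\nabla'}=d^1(\psi+P)=K_\nabla+d^1(P)$, where $d^1(P)$ computed in the $\overline R$-subcomplex agrees with the one in the $Q$-complex. Hence $K_{\nabla'}$ and $K_\nabla$ differ by the coboundary $d^1(P)$ and define the same class, so $\ic(M)$ is well defined. Finally, $\ic(M)=0$ means $K_\nabla=d^1(P_0)$ for some potential $P_0$, whence $K_{\nabla-P_0}=K_\nabla-d^1(P_0)=0$ and $\nabla-P_0$ is an integrable $\g$-connection; conversely, if some $\g$-connection $\nabla'=\nabla+P$ is integrable then $0=K_{\nabla'}=K_\nabla+d^1(P)$, so $K_\nabla=-d^1(P)$ is a coboundary and $\ic(M)=0$. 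This proves the stated equivalence.
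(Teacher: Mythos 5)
Your proof is correct, and it reaches the cocycle property by a genuinely different route than the paper. The paper deduces that $K_{\nabla}$ is a $2$-cocycle in $\operatorname{C}_{\operatorname{Rin}}^{\ast}(\g,\overline{R})$ by invoking the Bianchi identity $d^{2}(K_{\nabla})=0$ for the induced connection $\overline{\nabla}$, and then, exactly as you do, uses $K_{\nabla+P}=K_{\nabla}+d^{1}(P)$ (commutativity of $\overline{R}$ killing the quadratic term $[P_{D},P_{D'}]$) to obtain well-definedness of the class and the equivalence with integrability. You instead pass to the total ring of fractions $Q$, write $\nabla_{D}=D+\psi_{D}$ with $\psi\in\hmm_{R}(\g,Q)$, and observe that $K_{\nabla}=d^{1}(\psi)$ is already a coboundary in the $Q$-valued Rinehart complex; cocycleness in the subcomplex $\operatorname{C}_{\operatorname{Rin}}^{\ast}(\g,\overline{R})$ then follows from $d^{2}\circ d^{1}=0$, with no need to verify Bianchi at all. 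The two routes buy different things: the Bianchi identity argument generalizes to modules of arbitrary rank, where $\enm_{R}(M)$ is noncommutative and your trick is unavailable, while your argument is special to the rank-one torsion-free situation but is more self-contained and proves slightly more, namely that $\ic(M)$ lies in the kernel of the natural map $\hrin{2}(\g,\overline{R})\rightarrow\hrin{2}(\g,Q)$, i.e.\ the obstruction dies after extending scalars to $Q$. Two small points deserve to be made explicit in your write-up: first, that the natural action of $\g$ on $Q$ is an integrable connection (so that the ambient $Q$-valued complex really is a complex), which holds because the extension of a derivation from $R$ to $Q$ is unique and hence compatible with brackets; second, that $D\mapsto\psi_{D}$ is $R$-linear, which uses that the extension map $\der_{k}(R)\rightarrow\der_{k}(Q)$ is itself $R$-linear --- again immediate from uniqueness of extensions. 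Also, strictly speaking it is the commutativity of $Q$ (rather than of $\overline{R}$) that kills $[\psi_{D},\psi_{D'}]$, since the $\psi_{D}$ need not lie in $\overline{R}$; both commutativity statements, of course, come from the same embedding into $Q$ afforded by the rank-one torsion-free hypothesis.
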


\begin{proof}
Let $\nabla$ be a $\mathsf{g}$-connection on $M$ and let $\overline{\nabla}$
be the induced $\mathsf{g}$-connection on $\overline{R}=\enm_{R}(M)$. It
follows from the Bianchi identity
\begin{align*}
(d^{2}(K_{\nabla}))(D_{1}\wedge D_{2}\wedge D_{3})  &  =\overline{\nabla
}_{D_{1}}K_{\nabla}(D_{2}\wedge D_{3})-\overline{\nabla}_{D_{2}}K_{\nabla
}(D_{1}\wedge D_{3})\\
&  +\overline{\nabla}_{D_{3}}K_{\nabla}(D_{1}\wedge D_{2})-K_{\nabla}%
([D_{1},D_{2}]\wedge D_{3})\\
&  +K_{\nabla}([D_{1},D_{3}]\wedge D_{2})-K_{\nabla}([D_{2},D_{3}]\wedge
D_{1})=0
\end{align*}
that $K_{\nabla}$ is a $2$-cocycle in the Rinehart complex $\operatorname{C}%
_{\operatorname{Rin}}^{\ast}(\mathsf{g},\overline{R})$. We define
$\ic(M)=[K_{\nabla}]\in\operatorname{H}_{\operatorname{Rin}}^{2}%
(\mathsf{g},\overline{R})$, and see that $\ic(M)=0$ if and only if $K_{\nabla
}=d^{1}(\tau)$ for some potential $\tau\in\operatorname{C}_{\operatorname{Rin}%
}^{1}(\mathsf{g},\overline{R})$. A calculation shows that this condition holds
if and only if $\nabla-\tau$ is an integrable $\mathsf{g}$-connection on $M$,
since $K_{\nabla^{\prime}}=K_{\nabla}+d^{1}(P)$ when $\nabla^{\prime}%
=\nabla+P.$
\end{proof}

\begin{definition}
Let $\nabla$ and $\nabla^{\prime}$ be two $\mathsf{g}$-connections on $M.$ We
will say that $\nabla$ and $\nabla^{\prime}$ are equivalent if there is an
$R$-linear automorphism $\varphi$ of $M$ such that diagram commutes%
\[%
\begin{array}
[c]{lll}%
M & \overset{\nabla_{D}}{\longrightarrow} & M\\
\downarrow\varphi &  & \downarrow\varphi\\
M & \overset{\nabla_{D}^{\prime}}{\longrightarrow} & M
\end{array}
\]
commutes for all $D\in\mathsf{g}$.
\end{definition}

On may consider the category of modules with $\mathsf{g}$-connections, see
Section 1 in Eriksen and Gustavsen \cite{EriGus:08}. Then $\nabla\sim
\nabla^{\prime}$ if and only if $(M,\nabla)$ and $(M,\nabla^{\prime})$ are
isomorphic in this category.

\begin{theorem}
\label{torsor} Assume that $(R,\mathfrak{m})$ is a reduced complete local
noetherian $k$-algebra with residue field $k,$ and let $(M,\nabla)$ be a rank
one torsion free finitely generated $R$-module with an integrable $\mathsf{g}%
$-connection. Then there is a bijective correspondence between
$\operatorname{H}_{\operatorname{Rin}}^{1}(\mathsf{g},\overline{R})$ and the
set of equivalence classes of integrable $\mathsf{g}$-connections on $M$.
\end{theorem}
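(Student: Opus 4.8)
The plan is to use the fixed integrable $\mathsf{g}$-connection $\nabla$ as a base point and to parametrize all $\mathsf{g}$-connections by potentials. Since any $\mathsf{g}$-connection on $M$ has the form $\nabla + P$ with $P \in \hmm_R(\mathsf{g},\enm_R(M)) = \operatorname{C}_{\operatorname{Rin}}^{1}(\mathsf{g},\overline{R})$, and since $K_{\nabla+P} = K_{\nabla} + d^{1}(P) = d^{1}(P)$ because $K_{\nabla}=0$, the connection $\nabla + P$ is integrable if and only if $P$ is a $1$-cocycle. Thus $P \mapsto \nabla + P$ gives a bijection between the $1$-cocycles $Z^{1} \subseteq \operatorname{C}_{\operatorname{Rin}}^{1}(\mathsf{g},\overline{R})$ and the set of integrable $\mathsf{g}$-connections on $M$. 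It then remains to show that under this bijection the subgroup $B^{1} = \operatorname{im}(d^{0})$ of coboundaries corresponds exactly to the equivalence relation, so that passing to cohomology yields the claimed correspondence with $\hrin{1}(\mathsf{g},\overline{R})$.

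Next I would translate the equivalence relation into an equation among potentials. Since $\enm_R(M) = \overline{R}$, the $R$-linear automorphisms of $M$ are precisely the units $u \in \overline{R}^{*}$ acting by multiplication. Using the Leibniz rule $\nabla_D(um) = u\,\nabla_D(m) + \overline{\nabla}_D(u)\,m$ for $u \in \overline{R}$ and $m \in M$ — which follows by identifying $\nabla_D$ with $D+\psi_D$ on $M\subseteq Q$ as in Proposition \ref{end_con} and using that $u$ and $\psi_D$ commute in $Q$ — a direct computation shows that $u$ intertwines $\nabla + P$ and $\nabla + P'$ if and only if $P_D - P'_D = u^{-1}\,\overline{\nabla}_D(u)$ for all $D$, where $\overline{\nabla}_D(u) = D(u)$. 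Hence $\nabla + P$ and $\nabla + P'$ are equivalent if and only if $P - P'$ lies in the image of the logarithmic derivative $u \mapsto \bigl(D \mapsto u^{-1}D(u)\bigr)$.

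The heart of the proof is therefore to identify this image with the coboundaries $B^{1} = \operatorname{im}(d^{0})$, where $d^{0}(v)(D) = \overline{\nabla}_D(v) = D(v)$ for $v \in \overline{R}$. Here I would use that $\overline{R}$ is module-finite over the complete local ring $R$ and hence a complete local $k$-algebra with residue field $k$, so that $\overline{R} = k \oplus \overline{\mathfrak{m}}$ and the exponential and logarithm series converge on $\overline{\mathfrak{m}}$; this is exactly where completeness and $\operatorname{char} k = 0$ are essential. Given a unit $u = c(1+n)$ with $c \in k^{*}$ and $n \in \overline{\mathfrak{m}}$, and using $D(c)=0$, one has $u^{-1}D(u) = (1+n)^{-1}D(n) = D(\log(1+n))$, so the logarithmic derivative of $u$ equals $d^{0}(\log(1+n))$. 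Conversely, writing $v = a + w$ with $a \in k$ and $w \in \overline{\mathfrak{m}}$, the unit $u = \exp(w)$ satisfies $u^{-1}D(u) = D(w) = D(v) = d^{0}(v)(D)$. This identifies the image of the logarithmic derivative with $B^{1}$.

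Combining the steps, $\nabla + P$ and $\nabla + P'$ are equivalent if and only if $P - P' \in B^{1}$, and so the bijection $P \mapsto \nabla + P$ from $Z^{1}$ descends to a bijection from $Z^{1}/B^{1} = \hrin{1}(\mathsf{g},\overline{R})$ onto the set of equivalence classes of integrable $\mathsf{g}$-connections on $M$. The main obstacle is the identification in the previous paragraph: one must verify that $\overline{R}$ genuinely is a complete local $k$-algebra with residue field $k$ (or, if it is only a finite product of such, run the $\exp$/$\log$ argument componentwise), so that the series are available, and one must check that $D$ annihilates the constant part and any idempotents of $\overline{R}$, which is precisely what reduces the multiplicative logarithmic derivative to the additive coboundary $d^{0}$.
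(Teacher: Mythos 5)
Your proposal is correct and takes essentially the same route as the paper's proof: parametrize integrable $\mathsf{g}$-connections by $1$-cocycles of potentials (the bracket term $[P_{D_{1}},P_{D_{2}}]$ vanishing because $\overline{R}$ is commutative), and then identify equivalence with coboundaries by matching the multiplicative logarithmic derivative $u\mapsto u^{-1}D(u)$ with $d^{0}$ via $\exp$ and $\log$ in the complete ring $\overline{R}$. The caveat you raise at the end is exactly the point the paper handles: $\overline{R}$ is in general only a finite product of complete local rings with residue field $k$, and the paper runs your componentwise argument using that $k$-linear derivations annihilate idempotents, so that $d^{0}$ kills the semisimple part $k^{r}$ and $\exp$/$\log$ can be applied on the Jacobson radical.
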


\begin{proof}
Let $\tau\in\operatorname{Hom}_{R}(\mathsf{g},\overline{R}).$ We claim that
$\nabla-\tau$ is an integrable $\mathsf{g}$-connection if and only if $\tau$
is a 1-cocycle in $\operatorname{C}_{\operatorname{Rin}}^{\ast}(\mathsf{g}%
,\overline{R}).$ By definition, $d^{1}(\tau)(D_{1}\wedge D_{2})=\overline
{\nabla}_{D_{1}}\tau(D_{2})-\overline{\nabla}_{D_{2}}\tau(D_{1})-\tau
([D_{1},D_{2}])$ and therefore
\[
K_{\nabla-\tau}(D_{1}\wedge D_{2})=K_{\nabla}(D_{1}\wedge D_{2})+d^{1}%
(\tau)(D_{1}\wedge D_{2})+[\tau(D_{1}),\tau(D_{2})].
\]
Since $\overline{R}$ is commutative, $[\tau(D_{1}),\tau(D_{2})]=0$ and this
proves the claim.

The correspondence between $H^{1}(\mathsf{g},\overline{R})$ and equivalence
classes of integrable $\mathsf{g}$-connections is induced by $\tau
\mapsto\nabla-\tau.$ We must show that $\tau\in\operatorname*{im}d^{0}$ if and
only if $\nabla$ and $\nabla-\tau$ are equivalent.

Assume $\tau=d^{0}(\phi)$, where $\phi\in\enm_{R}(M)\cong\operatorname{C}%
_{\operatorname{Rin}}^{0}(\mathsf{g},\overline{R})$. This means that
$\tau(D)=\overline{\nabla}_{D}(\phi)$ for all $D\in\mathsf{g}.$ We claim that
there exists an automorphism $\psi\in\overline{R}$ such that $\psi\nabla
_{D}=(\nabla_{D}-\tau(D))\psi$ for all $D\in\mathsf{g}$. In fact, we have
$\tau(D)=D(\phi)$ and $\nabla_{D}\psi-\psi\nabla_{D}=D(\psi)$ by Proposition
\ref{end_con}, so $\psi\nabla_{D}=(\nabla_{D}-\tau(D))\psi$ if and only if
$D(\psi)=D(\phi)\psi$. Since $\overline{R}$ is is a finitely generated
$R$-module, $\overline{R}/\mathfrak{m}\overline{R}$ is an artinian ring. We
have that $J\cap R=\mathfrak{m}$ where $J$ is the Jacobsen radical in
$\overline{R},$ and it follows that $J^{n}\subseteq\mathfrak{m}\overline{R}$
for some $n.$ Thus $\overline{R}$ is complete in the $J$-adic topology. It
follows that $\overline{R}$ is a product of complete local rings with residue
field $k.$ We have $k^{r}\subset\overline{R}$ and $\overline{R}/J\cong k^{r}$
for some $r.$ If $e$ is any idempotent and $D$ a $k$-linear derivation, it
follows that $D(e)=0.$ Hence $d^{0}(k^{r})=0$ and we may assume that
$\tau=D(\phi)$ for $\phi\in J.$ It follows that $\psi=\exp(\phi)$ is in
$\overline{R}$ and $\psi\nabla_{D}=(\nabla_{D}-\tau(D))\psi$.

Conversely, assume that there is an automorphism $\psi\in\overline{R}$ such
that $\psi\nabla_{D}=(\nabla_{D}-\tau(D))\psi$ for all $D\in\mathsf{g}$. Since
$\psi$ is a unit, we can take $\phi=\log(\psi),$ and by an argument similar to
the one above $\phi\in\overline{R}.$ This implies $\tau=d^{0}(\phi)$.
\end{proof}

\section{The curve case}

In this section we assume that $R$ is reduced noetherian $k$-algebra of
dimension one, and consider in some detail the case when $R$ is a monomial curve.

\begin{proposition}
Let $\mathsf{g}\subseteq\der_{k}(R)$ be a Lie-Rinehart algebra and let $M$ be
a torsion free $R$-module. Then any $\mathsf{g}$-connection on $M$ is integrable.
\end{proposition}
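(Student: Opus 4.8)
The plan is to show that the curvature $K_\nabla$ vanishes, and the key point is that $K_\nabla$ is an $R$-linear map $\mathsf{g}\wedge_R\mathsf{g}\to\enm_R(M)$, so it is determined by its values on a generating set of the rank-one object $\wedge_R^2\mathsf{g}$. Since $R$ has dimension one, I expect to reduce the verification to the total ring of fractions $Q=Q(R)$ and use that $\mathsf{g}\subseteq\der_k(R)$ together with the torsion-freeness of $M$ to conclude that a map which vanishes after tensoring with $Q$ must already be zero.

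First I would embed everything into the generic situation. Because $M$ is torsion free, the map $M\to M\otimes_R Q$ is injective, and $\enm_R(M)\hookrightarrow\enm_Q(M\otimes_R Q)$; the curvature $K_\nabla$ maps into $\enm_R(M)$, so it suffices to prove $K_\nabla=0$ after tensoring with $Q$. Over $Q$ the module $M\otimes_R Q$ is free (of rank equal to the rank of $M$), and $Q$ is an Artinian ring (a finite product of fields, since $R$ is reduced of dimension one, so $Q$ is a finite product of the residue fields at the minimal primes). The second step is to analyze $\wedge_R^2\mathsf{g}$: I would argue that $\wedge_Q^2(\mathsf{g}\otimes_R Q)=0$. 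The point is that $\mathsf{g}\otimes_R Q\subseteq\der_k(R)\otimes_R Q\subseteq\der_k(Q)$, and over the $1$-dimensional (Artinian, generically of transcendence degree one over $k$) fraction ring $Q$, the module of derivations is locally free of rank one; hence any wedge of two derivations vanishes. Since $K_\nabla\otimes Q$ factors through $\wedge_Q^2(\mathsf{g}\otimes_R Q)=0$ and $\enm_R(M)\hookrightarrow\enm_Q(M\otimes_R Q)$ is injective, we get $K_\nabla=0$.

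Concretely, the computation behind the vanishing of $\wedge_Q^2(\mathsf{g}\otimes_R Q)$ is the statement that over each factor field $K_i$ of $Q$, the space of $k$-derivations of $K_i$ has the property that any two elements $D,D'$ are linearly dependent over $K_i$; indeed each such factor is the function field of a curve, a field of transcendence degree one over $k$, whose module of Kähler differentials $\Omega_{K_i/k}$ is one-dimensional, so $\der_k(K_i)=\hmm_{K_i}(\Omega_{K_i/k},K_i)$ is one-dimensional over $K_i$. For any $D\wedge D'$ we may then write $D'=fD$ in $\der_k(K_i)$ for some $f\in K_i$ (after discarding the trivial case $D=0$), whence $D\wedge D'=D\wedge fD=0$ in $\wedge^2$. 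Tracking this through all factors gives $\wedge_Q^2(\mathsf{g}\otimes_R Q)=0$, as claimed.

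I expect the main obstacle to be the justification that $\mathsf{g}$-derivations extend to $Q$ and that the induced map on curvatures commutes with localization, i.e.\ that the diagram relating $K_\nabla$ to its extension over $Q$ is compatible with the natural maps. This is essentially the content already used in Proposition~\ref{end_con}, where a derivation $D\in\mathsf{g}$ is lifted to a derivation of $Q$ and $\nabla_D$ is compared with $D$ inside $\enm_k(Q\otimes_R M)$; the same lifting makes $K_\nabla\otimes_R Q$ equal to the curvature of the induced $Q$-connection, which then vanishes for dimension reasons. Once this compatibility is in place, the argument closes immediately from the injectivity $\enm_R(M)\hookrightarrow\enm_Q(M\otimes_R Q)$.
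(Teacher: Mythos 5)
Your proof is correct and takes essentially the same route as the paper: the paper's argument is that $\wedge^{2}\mathsf{g}$ is a torsion $R$-module while $\enm_{R}(M)$ is torsion free, so $\hmm_{R}(\wedge^{2}\mathsf{g},\enm_{R}(M))=0$ and hence $K_{\nabla}=0$, and your passage to the total ring of fractions $Q$ is precisely the unpacking of that torsion argument, with the generic rank-one computation for derivations on a curve supplied explicitly where the paper merely asserts it. The only stylistic difference is your final worry about curvature commuting with localization, which is unnecessary --- once $(\wedge^{2}\mathsf{g})\otimes_{R}Q\cong\wedge_{Q}^{2}(\mathsf{g}\otimes_{R}Q)=0$, any $R$-linear map from $\wedge^{2}\mathsf{g}$ to a torsion-free module vanishes, whether or not it arises as a curvature.
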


\begin{proof}
If $\dim R=1,$ one has that $\operatorname{Hom}\nolimits_{R}(\wedge
^{2}\mathsf{g},\operatorname{End}_{R}(M))=0$ when $\mathsf{g}\subset
\operatorname{Der}_{k}(R)$ since $\enm_{R}(M)$ is torsion free and $\wedge
^{2}\mathsf{g}\subseteq\wedge^{2}\operatorname{Der}_{k}(R)$ is a torsion
module. If there exists a connection $\nabla$ on $M,$ the curvature, which is
an $R$-linear map $K_{\nabla}:\mathsf{g}\wedge\mathsf{g}\rightarrow
\enm_{R}(M)$, is necessarily zero. Thus the connection is automatically integrable.
\end{proof}

Let $R=k[\Gamma]$ be a monomial curve singularity given by a numerical
semigroup $\Gamma\subseteq\mathbb{N}_{0}$, where $H=\mathbb{N}_{0}%
\setminus\Gamma$ is a finite set. We consider a finitely generated graded
torsion free $R$-module $M$ of rank one. Up to graded isomorphism and a shift,
we may assume that $M$ has the form $M=k[\Lambda]$, where $\Lambda$ is a set
such that $\Gamma\subseteq\Lambda\subseteq\mathbb{N}_{0}$ and $\Gamma
+\Lambda\subseteq\Lambda$. Let $\Gamma^{(1)}=\{w\in H:w+(\Gamma\setminus
\{0\})\subseteq\Gamma\}$ and let $E$ be the Euler derivation. Then the set
$\{E\}\cup\{t^{w}E:w\in\Gamma^{(1)}\}$ is a minimal generating set for
$\mathsf{g}=\der_{k}(R)$ as a left $R$-module, see Eriksen \cite{eri:03}.

\begin{theorem}
Let $R=k[\Gamma]$ be a monomial curve singularity given by a numerical
semigroup $\Gamma\subseteq\mathbb{N}_{0}$, where $\mathbb{N}_{0}%
\setminus\Gamma$ is a finite set, and let $M$ be a finitely generated graded
torsion free $R$-module of rank one. If $\nabla$ is a connection on $M$, then
\[
\operatorname{H}_{\operatorname{Rin}}^{i}(\operatorname{Der}_{k}%
(R),M,\nabla)=0\text{ for }i\geq1.
\]

\end{theorem}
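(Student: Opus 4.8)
The plan is to collapse the Rinehart complex onto the single endomorphism $\nabla_E$ attached to the Euler derivation $E$, and then to solve $\nabla_E(m)=n$ one graded piece at a time. First I would kill the higher cohomology. Since $R$ is a one-dimensional domain, $\der_k(R)$ has rank one over $R$, so $\wedge_R^n\der_k(R)$ is a torsion module for every $n\ge 2$; as $M$ is torsion free this gives $\operatorname{C}_{\operatorname{Rin}}^n(\der_k(R),M,\nabla)=\hmm_R(\wedge_R^n\der_k(R),M)=0$ for $n\ge 2$. (By the preceding Proposition $\nabla$ is automatically integrable, so the complex is defined.) Hence $\operatorname{H}_{\operatorname{Rin}}^i=0$ for $i\ge 2$ for free, and the complex reduces to $M\xrightarrow{\,d^0\,}\hmm_R(\der_k(R),M)$ with $d^0(m)(D)=\nabla_D(m)$. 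It remains to show that $d^0$ is surjective, i.e.\ that $\operatorname{H}_{\operatorname{Rin}}^1=\operatorname{coker}(d^0)=0$.

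Second I would reduce this to one equation. Over the total ring of fractions $Q$, the module $\der_k(R)$ becomes free of rank one on $E$, and both $\nabla$ and any $P\in\hmm_R(\der_k(R),M)$ extend $Q$-linearly. Writing $D=fE$ with $f\in Q$ and using Proposition~\ref{end_con} (which writes $\nabla_D=D+\psi_D$ with $\psi$ an $R$-linear map into $Q$), one gets $\nabla_D=f\nabla_E$ and $P(D)=fP(E)$ inside $Q$. Therefore $d^0(m)=P$ holds for \emph{all} $D$ as soon as $\nabla_E(m)=P(E)$, so $d^0$ is surjective if and only if $\nabla_E(m)=n$ is solvable in $M$ for every occurring value $n=P(E)$. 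Identifying $\hmm_R(\der_k(R),M)\cong(M:\mathfrak c)$, where $\mathfrak c$ is the fractional ideal with $\der_k(R)=\mathfrak c\cdot E$ (generated over $R$ by $1$ and by the $t^{w}$, $w\in\Gamma^{(1)}$), the occurring values are exactly $(M:\mathfrak c)\subseteq M$, and the problem becomes $\nabla_E(M)\supseteq(M:\mathfrak c)$.

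Third I would exploit the grading. After filtering the complex by degree (so that the associated graded differential is the homogeneous part of $\nabla_E$) one reduces to a homogeneous connection. The Leibniz rule shows that $\nabla_E-E$ is $R$-linear, hence an element of $\enm_R(M)$, which is nonnegatively graded with $\enm_R(M)_0=k$; so $\nabla_E$ acts on $M_w$ as multiplication by $w+c$ for a single scalar $c\in k$, plus strictly degree-raising terms. The equation $\nabla_E(m)=n$ is then triangular in the degree and can be solved recursively in increasing $w$: wherever $w+c\ne 0$ the leading coefficient is invertible and $m_w$ is uniquely determined. Consequently $\operatorname{H}_{\operatorname{Rin}}^1$ is concentrated in the single critical degree $w=-c$.

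The critical degree is where I expect the real work to lie. There $\nabla_E$ has nontrivial cokernel, and one must show that $(M:\mathfrak c)_{-c}=0$, i.e.\ that no potential value $P(E)$ sits in that degree. This forces one to pin down the invariant $c$ precisely — extracted from the requirement that each $\nabla_{t^wE}=t^w\nabla_E$, for $w\in\Gamma^{(1)}$, preserve $M$ — and then to verify the vanishing of $(M:\mathfrak c)_{-c}$ directly from the combinatorics of $\Gamma$, $\Gamma^{(1)}$ and the set $\Lambda$ defining $M=k[\Lambda]$. This semigroup bookkeeping at the one bad degree, rather than any homological input, is the crux of the argument.
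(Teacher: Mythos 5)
Your first two steps coincide with the paper's proof: the torsion argument kills $\operatorname{C}_{\operatorname{Rin}}^{n}$ for $n\geq 2$, and evaluation at the Euler derivation identifies $\hmm_{R}(\der_{k}(R),M)$ with $(M:\mathfrak{c})=k[\Lambda\setminus S]$, where $S=\{\lambda\in\Lambda:\lambda+\Gamma^{(1)}\not\subseteq\Lambda\}$; this is exactly the paper's map $r$. The genuine gap is that you stop at the decisive point: the vanishing of $(M:\mathfrak{c})$ in the critical degree is deferred to unspecified ``semigroup bookkeeping''. That vanishing is not a combinatorial fact about $\Gamma$, $\Gamma^{(1)}$ and $\Lambda$ alone, because it involves the scalar $c$ coming from the connection. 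The paper closes this by deriving the constraint you only allude to: writing $\nabla_{E}=E+\psi$ with $\psi\in\enm_{R}(M)$ and degree-zero part $c$, the requirement that $\nabla_{t^{w}E}=t^{w}\nabla_{E}$ map $M$ into $M$ forces, in lowest degree, $\lambda+c=0$ for every $\lambda\in S$; hence $S\subseteq\{-c\}$, so a connection exists only if $S$ has at most one element, and if $S=\{\lambda_{0}\}$ then $-c=\lambda_{0}\in S$, whence $(M:\mathfrak{c})_{-c}=k[\Lambda\setminus S]_{\lambda_{0}}=0$ \emph{tautologically}. In other words, once the constraint is extracted the crux is immediate; without it your plan cannot close.

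Worse, in the complementary case $S=\emptyset$ the constraint is vacuous, and the claim $(M:\mathfrak{c})_{-c}=0$ is simply false whenever $-c\in\Lambda$, so no bookkeeping can prove it. Concretely, take $R=k[t^{2},t^{3}]$ and $M=k[t]$, so $\Lambda=\mathbb{N}_{0}$, $\Gamma^{(1)}=\{1\}$, $S=\emptyset$, $\mathfrak{c}=R+tR=k[t]$ and $(M:\mathfrak{c})=k[t]$, with the tautological connection $\nabla_{E}=E$, $\nabla_{tE}=tE$. Here $-c=0\in\Lambda$, and $\operatorname{im}(d^{0})$ corresponds under $r$ to $E(k[t])=tk[t]$ inside $\operatorname{C}_{\operatorname{Rin}}^{1}\cong k[t]$, so $\operatorname{H}_{\operatorname{Rin}}^{1}(\der_{k}(R),M,\nabla)\cong k\neq 0$. (This edge case, $l=0$ with $c\in\Lambda$ in the paper's notation, is also the point where the paper's own assertion that $t^{c}\notin\operatorname{im}(r)$ breaks down, so the difficulty you deferred is real and not merely technical.) A secondary gap: for a non-homogeneous connection, where $\nabla_{E}-E$ has terms of positive degree, your degree-by-degree recursion solves $\nabla_{E}(m)=n$ only in the completion $k[[\Lambda]]$; elements of $M$ are finite sums, so surjectivity even away from the critical degree needs a further argument. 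The paper sidesteps both issues by treating only the homogeneous connections $\nabla_{E}=E-c$.
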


\begin{proof}
Let $\mathsf{g}=\operatorname{Der}_{k}(R)$. Consider the set $S=\{\lambda
\in\Lambda:\lambda+\Gamma^{(1)}\not \subseteq \Lambda\}$ and let $l$ be the
cardinality of $S$. There are three possibilities:

\begin{enumerate}
\item $l = 0$: $\nabla_{E} = E - c$ defines an integrable connection on $M$
for all $c \in k$.

\item $l = 1$: $\nabla_{E} = E - c$ defines an integrable connection on $M$
iff $c = \lambda_{0}$ is the unique element in $S$.

\item $l \ge2$: there are no connections on $M$
\end{enumerate}

Assume $l\leq1$ and consider the connection given by $\nabla_{E}=E-c$, where
$c\in k$ if $l=0$ and $c=\lambda_{0}$ if $l=1$. Let $r:\operatorname{Hom}%
_{R}(\mathsf{g},M)\rightarrow M$ be given by $\phi\mapsto\phi(E)$. Then the
composition
\[
M\xrightarrow{d^{0}}\operatorname{Hom}\nolimits_{R}(\mathsf{g}%
,M)\xrightarrow{r} M
\]
is the operator $\nabla_{E}$. We claim that $r$ is injective with image
$k[\Lambda\setminus\{c\}]\subseteq M$. In fact, if $\phi\in\operatorname{Hom}%
_{R}(\mathsf{g},M)$ with $\phi(E)=0$, then $\phi(t^{w}E)=t^{w}\phi(E)=0$ since
$M$ is torsion free. Therefore $r$ is injective. Consider $t^{\lambda}$ with
$\lambda\in\Lambda\setminus\{c\}$. Since $\lambda\not \in S$, we have that
$\Gamma^{(1)}+\lambda\subseteq\Lambda$. Therefore, $\phi(E)=t^{\lambda}$ and
$\phi(t^{w}E)=t^{w}t^{\lambda}$ for $w\in\Gamma^{(1)}$ defines a well-defined
$R$-linear map $\phi:\mathsf{g}\rightarrow M$. Moreover, we clearly have
$t^{c}\not \in \operatorname{im}(r)$. Since $r$ is a graded homomorphism, this
proves that $\operatorname{im}(r)=k[\Lambda\setminus\{c\}]\subseteq M.$ If
$\phi\in\hmm_{R}(\mathsf{g},M)$ is homogenous of degree $w$, then $\phi(E)\in
M$ is homogenous of degree $w$ and $w\ne c.$ This implies that $d^{0}%
(\phi(E)/(w-c))=\phi$ and therefore $d^{0}$ is surjective. We conclude that
$\operatorname{H}_{\operatorname{Rin}}^{1}(\mathsf{g},M)=0$. Since
$\operatorname{C}^{i}_{\operatorname{Rin}}(\mathsf{g},M,\nabla)=\hmm_{R}%
(\wedge^{i}\mathsf{g},M)=0$ for $i\ge2,$ $\operatorname{H}_{\operatorname{Rin}%
}^{i}(\operatorname{Der}_{k}(R),M,\nabla)=0\text{ for }i\geq1.$
\end{proof}

Given a graded torsion free $R$-module $M$ of rank one on a monomial curve
$R,$ it does not necessarily exist a connection on $M,$ see Section 5.2 in
Eriksen and Gustavsen \cite{EriGus:08}. However, if there exists a connection,
it is integrable and unique up to analytic isomorphism by Theorem \ref{torsor}
and the proposition above.

\section{The case of an isolated normal singularity}

In this section, we assume that $R$ is a noetherian Cohen-Macaulay domain over
$k$ of dimension $d\geq2$ with a unique isolated singularity. For any finitely
generated torsion free $R$-module $M$ of rank one, $\overline{R}%
=\enm_{R}(M)=R$ from Proposition \ref{end_con} since $R$ is normal by Serre's
normality criterion. Moreover, if $M$ admits a connection $\nabla,$ then the
induced connection $\overline{\nabla}$ on $\overline{R}$ is the standard
action of $\operatorname{Der}_{k}(R)$ on $R.$

From Proposition \ref{int_class}, we know that $\operatorname{H}%
_{\operatorname{Rin}}^{2}(\operatorname{Der}_{k}(R),R)$ contains the
obstruction for the existence of an integrable connection on $M$. If it
vanishes, then it follows from Theorem \ref{torsor} that $\operatorname{H}%
_{\operatorname{Rin}}^{1}(\operatorname{Der}_{k}(R),R)$ is a moduli space for
the integrable connections on $M,$ up to analytic equivalence.

When $k=\mathbb{C}$ is the complex numbers, there are also other
interpretations of the $k$-vector spaces $\operatorname{H}_{\operatorname{Rin}%
}^{1}(\operatorname{Der}_{k}(R),R)$ and $\operatorname{H}_{\operatorname{Rin}%
}^{2}(\operatorname{Der}_{k}(R),R)$: In this section, we will relate the
Lie-Rinehart cohomology $\operatorname{H}_{\operatorname{Rin}}^{\ast
}(\operatorname{Der}_{k}(R),R)$ to the topological cohomology
$\operatorname{H}^{\ast}(U_{\operatorname{an}},\mathbb{C})$ where
$U=X\setminus\{x\},$ $X=\operatorname{Spec}(R)$ and $x\in X$ is the singular
point. The Lie-Rinehart cohomology $\operatorname{H}_{\operatorname{Rin}%
}^{\ast}(\operatorname{Der}_{k}(R),R)$ is also closely related to the
punctured deRham cohomology of Huang, Luk and Yau \cite{HuaLukYau:03} and
therefore to the $\mu$ and $\tau$ invariants.

\begin{theorem}
\label{five_term}Let $R$ be a finitely generated Cohen-Macaulay domain over
$\mathbb{C}$ of dimension $d\geq2$ with a unique isolated singularity $x\in
X=\operatorname{Spec}(R).$ Then there is a natural exact sequence%
\[
0\rightarrow\operatorname{H}_{\operatorname{Rin}}^{1}(\operatorname{Der}%
(R),R)\rightarrow\operatorname{H}^{1}(U_{\operatorname{an}},\mathbb{C}%
)\rightarrow E_{2}^{0,1}\rightarrow\operatorname{H}_{\operatorname{Rin}}%
^{2}(\operatorname{Der}(R),R)\rightarrow\operatorname{H}^{2}%
(U_{\operatorname{an}},\mathbb{C})
\]
where $E_{2}^{0,1}=\ker(\operatorname{H}^{1}(U,\mathcal{O}_{X})\rightarrow
\operatorname{H}^{1}(U,\Omega_{X}^{1}))$ and $U=X\setminus\{x\}$. Moreover, if
$d\geq3,$ then $\operatorname{H}_{\operatorname{Rin}}^{1}(\operatorname{Der}%
(R),R)\cong\operatorname{H}^{1}(U_{\operatorname{an}},\mathbb{C})$ and
$\operatorname{H}_{\operatorname{Rin}}^{2}(\operatorname{Der}%
(R),R)\hookrightarrow\operatorname{H}^{2}(U_{\operatorname{an}},\mathbb{C}).$
\end{theorem}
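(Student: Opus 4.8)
The plan is to realize the Rinehart complex $\operatorname{C}_{\operatorname{Rin}}^{\ast}(\operatorname{Der}(R),R)$ as the complex of global algebraic differential forms on the smooth locus $U=X\setminus\{x\}$, and then to read off the assertion from the hypercohomology spectral sequence of the algebraic de Rham complex on $U$. I first note that, since $R$ is Cohen--Macaulay with an isolated singularity in dimension $d\geq2$, it is normal by Serre's criterion, so $\operatorname{Der}_k(R)$ is the module of sections of the tangent sheaf $\mathcal{T}_X$, which restricts to $\mathcal{T}_U$ on the smooth locus.

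The key step is the identification $\operatorname{C}_{\operatorname{Rin}}^{n}(\operatorname{Der}(R),R)\cong\operatorname{H}^{0}(U,\Omega_{U}^{n})$. For each $n$, the $R$-module $\operatorname{Hom}_R(\wedge_R^n\operatorname{Der}_k(R),R)$ is the module of global sections of the sheaf $\mathcal{H}om_{\mathcal{O}_X}(\wedge^n\mathcal{T}_X,\mathcal{O}_X)$, which is reflexive because it is a dual sheaf on a normal variety. Since this sheaf is reflexive and $X\setminus U=\{x\}$ has codimension $d\geq2$, restriction to $U$ is an isomorphism on global sections; and on the smooth locus the sheaf restricts to $\mathcal{H}om(\wedge^n\mathcal{T}_U,\mathcal{O}_U)=\Omega_U^n$. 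A direct comparison then shows that the Rinehart differential $d^n$ agrees with the de Rham differential under this identification, so $\operatorname{H}_{\operatorname{Rin}}^{\ast}(\operatorname{Der}(R),R)$ is the cohomology of the complex of global sections $\operatorname{H}^{0}(U,\Omega_U^{\ast})$.

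Next I would invoke Grothendieck's algebraic de Rham theorem, which for the smooth variety $U$ gives $\mathbb{H}^{n}(U,\Omega_U^{\ast})\cong\operatorname{H}^{n}(U_{\operatorname{an}},\mathbb{C})$, and consider the first hypercohomology spectral sequence $E_1^{p,q}=\operatorname{H}^{q}(U,\Omega_U^{p})\Rightarrow\operatorname{H}^{p+q}(U_{\operatorname{an}},\mathbb{C})$, whose $d_1$ is induced by $d$. By the identification above the bottom row yields $E_2^{p,0}=\operatorname{H}_{\operatorname{Rin}}^{p}(\operatorname{Der}(R),R)$, while $E_2^{0,1}=\ker(\operatorname{H}^{1}(U,\mathcal{O}_X)\to\operatorname{H}^{1}(U,\Omega_X^{1}))$ as in the statement. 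The asserted exact sequence is then precisely the five-term exact sequence $0\to E_2^{1,0}\to\operatorname{H}^{1}\to E_2^{0,1}\xrightarrow{d_2}E_2^{2,0}\to\operatorname{H}^{2}$ attached to this first-quadrant spectral sequence.

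Finally, for $d\geq3$ I would show $E_2^{0,1}=0$ by proving $\operatorname{H}^{1}(U,\mathcal{O}_U)=0$. Since $X$ is affine, the local cohomology long exact sequence collapses to an isomorphism $\operatorname{H}^{1}(U,\mathcal{O}_U)\cong\operatorname{H}_{\mathfrak{m}}^{2}(R)$, and the Cohen--Macaulay hypothesis forces $\operatorname{H}_{\mathfrak{m}}^{j}(R)=0$ for $j<d$, hence $\operatorname{H}_{\mathfrak{m}}^{2}(R)=0$ when $d\geq3$. With $E_2^{0,1}=0$ the five-term sequence collapses to the stated isomorphism $\operatorname{H}_{\operatorname{Rin}}^{1}(\operatorname{Der}(R),R)\cong\operatorname{H}^{1}(U_{\operatorname{an}},\mathbb{C})$ and injection $\operatorname{H}_{\operatorname{Rin}}^{2}(\operatorname{Der}(R),R)\hookrightarrow\operatorname{H}^{2}(U_{\operatorname{an}},\mathbb{C})$. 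I expect the main obstacle to be the identification step: carefully matching the Rinehart differential with the de Rham differential and justifying the reflexivity and codimension-two extension of sections, since the spectral sequence and the local-cohomology vanishing are then routine.
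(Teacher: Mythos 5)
Your proposal is correct and follows essentially the same route as the paper: both identify the Rinehart complex $\operatorname{C}_{\operatorname{Rin}}^{\ast}(\operatorname{Der}(R),R)$ with the global sections over $U$ of the de Rham complex (using reflexivity of the dual sheaves and extension of sections across the codimension-$\geq 2$ singular point), invoke Grothendieck's algebraic de Rham theorem to compute the hypercohomology, extract the five-term exact sequence of the first hypercohomology spectral sequence, and settle the case $d\geq 3$ by the local-cohomology depth vanishing $\operatorname{H}^{1}(U,\mathcal{O}_{X})\cong\operatorname{H}_{\mathfrak{m}}^{2}(R)=0$. The only cosmetic difference is that the paper works throughout with the sheafified Rinehart complex $\mathcal{C}_{\operatorname{Rin}}^{\ast}|_{U}$ and observes that it coincides with $\Omega_{U}^{\ast}$ on the smooth locus, whereas you pass to $\Omega_{U}^{\ast}$ from the outset.
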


\begin{proof}
The definition of the Lie-Rinehart complex generalizes to give a complex
$\mathcal{C}_{\operatorname{Rin}}^{\ast}(\Theta_{X},\mathcal{O}_{X})$ of
sheaves on $X,$ given by
\[
\mathcal{C}_{\operatorname{Rin}}^{n}(\Theta_{X},\mathcal{O}_{X})=\mathcal{H}%
om_{\mathcal{O}_{X}}(\wedge^{n}\Theta_{X},\mathcal{O}_{X})
\]
with the natural action of the tangent sheaf $\Theta_{X}$ on $\mathcal{O}%
_{X}.$ In particular, there is a restricted complex $\mathcal{C}%
_{\operatorname{Rin}}^{\ast}|_{U}=\mathcal{C}_{\operatorname{Rin}}^{\ast
}(\Theta_{U},\mathcal{O}_{U})$ of sheaves on $U.$ Denote by $\mathbb{H}%
^{i}=\mathbb{H}^{i}(U,\mathcal{C}_{\operatorname{Rin}}^{\ast}|_{U})$ the
hypercohomology of the sheafified Lie-Rinehart complex, see for instance 5.7.9
in Weibel \cite{Wei:94}. From the five term sequence, we get%
\[
0\rightarrow E_{2}^{1,0}\rightarrow\mathbb{H}^{1}\rightarrow E_{2}%
^{0,1}\rightarrow E_{2}^{2,0}\rightarrow\mathbb{H}^{2}%
\]
where $E_{2}^{p,q}={}^{I}\hspace{-2pt}E_{2}^{p,q}\cong\operatorname{H}%
^{p}(\operatorname{H}^{q}(U,\mathcal{C}_{\operatorname{Rin}}^{\ast}|_{U})).$
Consider in particular the vector spaces $E_{2}^{p,0}=\operatorname{H}%
^{p}(\operatorname{H}^{0}(U,\mathcal{C}_{\operatorname{Rin}}^{\ast}|_{U})).$
Since $\mathcal{C}_{\operatorname{Rin}}^{p}|_{U}$ is sheaf of reflexive
modules for $p\geq0$ by Corollary 1.2 in Hartshorne \cite{Har:80}, we get from
Proposition 1.6.(iii) in \cite{Har:80} that $\operatorname{H}^{0}%
(U,\mathcal{C}_{\operatorname{Rin}}^{\ast}|_{U})=$ $\operatorname*{C}%
_{\operatorname{Rin}}^{\ast}(\operatorname{Der}(R),R).$ Thus $E_{2}^{p,0}%
\cong\operatorname{H}_{\operatorname{Rin}}^{p}(\operatorname{Der}(R),R).$ Note
further that since $U$ is smooth, $\mathcal{C}_{\operatorname{Rin}}^{\ast
}|_{U}$ coincides with the deRham complex, so by Grothendieck's algebraic
deRham theorem, $\mathbb{H}^{i}=\mathbb{H}^{i}(U,\mathcal{C}%
_{\operatorname{Rin}}^{\ast}|_{U})\cong\operatorname{H}^{i}%
(U_{\operatorname{an}},\mathbb{C}),$ see \cite{Gro:66a}. On the other hand, we
see that
\[
E_{2}^{0,1}=\operatorname{H}^{0}(\operatorname{H}^{1}(U,\mathcal{C}%
_{\operatorname{Rin}}^{\ast}|_{U}))\cong\ker(\operatorname{H}^{1}%
(U,\mathcal{O}_{X})\rightarrow\operatorname{H}^{1}(U,\Omega_{X}^{1})).
\]

For the last part, we notice that $\operatorname{H}^{1}(U,\mathcal{O}%
_{X})=\operatorname{H}_{\{x\}}^{2}(\mathcal{O}_{X}),$ where the last group is
the local cohomology with respect to the closed subscheme $\{x\},$ see for
instance 4.6.2 in Weibel \cite{Wei:94}. By Corollary 4.6.9 in \cite{Wei:94},
this group vanishes if $d\geq3.$ In particular, it follows that $E_{2}%
^{0,1}=0$ in this case.
\end{proof}

For surface singularities, it is in general difficult to compute $E_{2}^{0,1}$
directly. We have the following partial results:

\begin{remark}
For a rational complex surface singularity with link $L,$ it is known that
$\operatorname{H}^{1}(L,\mathbb{C})=0,$ see Mumford \cite{Mum:61}, and by
Poincar\`{e} duality, $\operatorname{H}^{2}(L,\mathbb{C})=0.$ For simplicity,
we assume that $R$ is quasi-homogenous (for instance a quotient singularity).
In this case, we have $\operatorname{H}^{i}(U_{\operatorname{an}}%
,\mathbb{C})=\operatorname{H}^{i}(L,\mathbb{C})=0$ for $i=1$ and $2.$ Thus
$\operatorname{H}_{\operatorname{Rin}}^{1}(\operatorname{Der}(R),R)=0\ $and
$E_{2}^{0,1}\cong\operatorname{H}_{\operatorname{Rin}}^{2}(\operatorname{Der}%
(R),R).$
\end{remark}

\begin{remark}
For simple elliptic complex surface singularities, Kahn has shown that
$\operatorname{H}_{\operatorname{Rin}}^{1}(\operatorname{Der}(R),R)\cong%
\mathbb{C}$, see \cite{kah:88}. One the other hand, $\operatorname{H}%
^{1}(U_{\operatorname{an}},\mathbb{C})\cong\mathbb{C}^{2},$ so $E_{2}%
^{0,1}\neq0$ in this case.
\end{remark}

Let us consider the punctured local holomorphic de Rham cohomology $H_{h}%
^{i}(V,x)$ introduced in Huang, Luk and Yau \cite{HuaLukYau:03} for a germ
$(V,x)$ of a complex analytic space. In fact, for a hypersurface singularity
$R$ of dimension $d\geq2,$ $H_{h}^{i}(V,x)$ coincides with $\operatorname{H}%
_{\operatorname{Rin}}^{i}(\operatorname{Der}_{\mathbb{C}}(R),R)$ for $i\geq1,$
see Lemma 2.7 in \cite{HuaLukYau:03} and Proposition 1.6.(iii) in Hartshorne
\cite{Har:80}. The following result follows from the main theorem in Huang,
Luk and Yau \cite{HuaLukYau:03}:

\begin{theorem}
Assume that $R=\mathbb{C}[[x_{0},\dots,x_{d}]]/(f)$ and that $R$ is an
isolated singularity of dimension $d\geq2$. Then

\begin{enumerate}
\label{Huang_etal}

\item $\dim_{\mathbb{C}}\operatorname{H}_{\operatorname{Rin}}^{i}%
(\operatorname{Der}_{\mathbb{C}}(R),R)=0$ for $1\leq i\leq d-2,$

\item $\dim_{\mathbb{C}}\operatorname{H}_{\operatorname{Rin}}^{d}%
(\operatorname{Der}_{\mathbb{C}}(R),R)-\dim_{\mathbb{C}}\operatorname{H}%
_{\operatorname{Rin}}^{d-1}(\operatorname{Der}_{\mathbb{C}}(R),R)=\mu-\tau$,
\end{enumerate}

\noindent where $\mu$ is the Milnor number and $\tau$ is the Tjurina number.
\end{theorem}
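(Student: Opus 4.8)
The strategy is to reduce the statement to the corresponding assertions about the punctured local holomorphic de Rham cohomology $H_{h}^{\ast}(V,x)$ of Huang, Luk and Yau \cite{HuaLukYau:03}, and then to quote their main theorem. The bridge is the identification already recorded above: for a hypersurface singularity $R$ of dimension $d \geq 2$,
\[
H_{h}^{i}(V,x) \cong \operatorname{H}_{\operatorname{Rin}}^{i}(\operatorname{Der}_{\mathbb{C}}(R),R) \quad \text{for } i \geq 1,
\]
which follows from Lemma 2.7 in \cite{HuaLukYau:03} together with Proposition 1.6.(iii) in Hartshorne \cite{Har:80}. Both parts of the theorem concern only indices $i \geq 1$: the range $1 \leq i \leq d-2$ in part (1) is nonempty precisely when $d \geq 3$, and the indices $i = d-1, d$ appearing in part (2) satisfy $d-1 \geq 1$ since $d \geq 2$. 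Hence the isomorphism applies in every degree that occurs, and it suffices to establish both statements for $H_{h}^{\ast}(V,x)$.

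For part (1), I would invoke the vanishing portion of the main theorem of \cite{HuaLukYau:03}, which asserts that for an isolated hypersurface singularity of dimension $d$ one has $H_{h}^{i}(V,x) = 0$ in the range $1 \leq i \leq d-2$. Transporting this vanishing across the isomorphism immediately gives $\dim_{\mathbb{C}} \operatorname{H}_{\operatorname{Rin}}^{i}(\operatorname{Der}_{\mathbb{C}}(R),R) = 0$ for $1 \leq i \leq d-2$. For part (2), I would use the computation in \cite{HuaLukYau:03} of the top two punctured de Rham cohomology groups, which expresses the difference of their dimensions in terms of the analytic invariants of the singularity and yields $\dim_{\mathbb{C}} H_{h}^{d}(V,x) - \dim_{\mathbb{C}} H_{h}^{d-1}(V,x) = \mu - \tau$, where $\mu$ and $\tau$ are the Milnor and Tjurina numbers. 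Applying the isomorphism in degrees $d-1$ and $d$ then produces the stated formula.

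The main obstacle, such as it is, is not a genuine mathematical difficulty but the matching of conventions between the two frameworks, since the substantive work has already been carried out in \cite{HuaLukYau:03}. One must check that the formal local ring $\mathbb{C}[[x_{0},\dots,x_{d}]]/(f)$ and the analytic germ $(V,x)$ compute the same cohomology---this is standard for isolated singularities, where the formal, analytic and algebraic punctured de Rham cohomologies agree---and that the degree indexing of the punctured de Rham complex is aligned with that of the Lie-Rinehart complex. The potential discrepancy is confined to degree $0$, which is precisely why the identification, and hence the present theorem, is stated only for $i \geq 1$. Once these bookkeeping points are settled, the result is a direct translation of the main theorem of \cite{HuaLukYau:03} and no further computation is required.
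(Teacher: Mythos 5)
Your proposal is correct and follows essentially the same route as the paper: the paper likewise treats this theorem as an immediate consequence of the main theorem of Huang, Luk and Yau, transported through the identification $H_{h}^{i}(V,x)\cong\operatorname{H}_{\operatorname{Rin}}^{i}(\operatorname{Der}_{\mathbb{C}}(R),R)$ for $i\geq1$ obtained from Lemma 2.7 of that paper together with Proposition 1.6.(iii) of Hartshorne. Your additional remarks on the formal/analytic comparison and on the restriction to $i\geq1$ are sound bookkeeping that the paper leaves implicit.
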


\begin{corollary}
If $d\geq3,$ then $\operatorname{H}_{\operatorname{Rin}}^{1}%
(\operatorname{Der}_{\mathbb{C}}(R),R)\cong\operatorname{H}^{1}%
(U_{\operatorname{an}},\mathbb{C})=0$ and if $d\geq4$, then $\operatorname{H}%
_{\operatorname{Rin}}^{2}(\operatorname{Der}_{\mathbb{C}}(R),R)=0$.
\end{corollary}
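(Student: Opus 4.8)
The plan is to read off both assertions directly from Theorems~\ref{five_term} and~\ref{Huang_etal}, once I have checked that a power series hypersurface isolated singularity meets the hypotheses of both. First I would verify hypothesis compatibility. Since $R=\mathbb{C}[[x_{0},\dots,x_{d}]]/(f)$ is a hypersurface, it is a complete intersection and hence Cohen--Macaulay. As $R$ is an isolated singularity of dimension $d\geq 2$, it satisfies Serre's conditions $R_{1}$ and $S_{2}$, so it is normal, and being local and normal it is a domain. Thus the hypotheses of Theorem~\ref{Huang_etal} hold verbatim, while those of Theorem~\ref{five_term} hold after replacing $R$ by a finitely generated algebraic representative with the same completion; since both the Lie-Rinehart cohomology $\operatorname{H}_{\operatorname{Rin}}^{\ast}(\operatorname{Der}_{\mathbb{C}}(R),R)$ and the cohomology of the punctured spectrum $U=X\setminus\{x\}$ depend only on the singularity germ, the conclusion of Theorem~\ref{five_term} transfers to the present setting.

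For the case $d\geq 3$, I would apply part (1) of Theorem~\ref{Huang_etal} with $i=1$: since $d\geq 3$ gives $1\leq 1\leq d-2$, we obtain $\operatorname{H}_{\operatorname{Rin}}^{1}(\operatorname{Der}_{\mathbb{C}}(R),R)=0$. The last part of Theorem~\ref{five_term} supplies the isomorphism $\operatorname{H}_{\operatorname{Rin}}^{1}(\operatorname{Der}_{\mathbb{C}}(R),R)\cong\operatorname{H}^{1}(U_{\operatorname{an}},\mathbb{C})$ valid for $d\geq 3$, and combining the two forces $\operatorname{H}^{1}(U_{\operatorname{an}},\mathbb{C})=0$ as well. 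This is exactly the first assertion.

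For $d\geq 4$, I would apply part (1) of Theorem~\ref{Huang_etal} with $i=2$: the inequality $d\geq 4$ gives $1\leq 2\leq d-2$, so $\operatorname{H}_{\operatorname{Rin}}^{2}(\operatorname{Der}_{\mathbb{C}}(R),R)=0$, which is the second assertion. No further computation is required, since the substantive content is entirely contained in the two theorems already established. The only point needing genuine care is the hypothesis matching described above --- reconciling the complete local hypersurface of Theorem~\ref{Huang_etal} with the finitely generated domain of Theorem~\ref{five_term} --- which I regard as the main (and ultimately minor) obstacle, resolved by the germ-invariance of both sides of the isomorphism.
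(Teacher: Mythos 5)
Your proof is correct and is precisely the paper's (implicit) argument: the corollary is stated without proof because it follows immediately by applying Theorem~\ref{Huang_etal}(1) with $i=1$ (for $d\geq3$) and $i=2$ (for $d\geq4$), combined with the isomorphism $\operatorname{H}_{\operatorname{Rin}}^{1}(\operatorname{Der}(R),R)\cong\operatorname{H}^{1}(U_{\operatorname{an}},\mathbb{C})$ from the last part of Theorem~\ref{five_term}. Your additional care in reconciling the complete local hypersurface setting of Theorem~\ref{Huang_etal} with the finitely generated domain setting of Theorem~\ref{five_term} addresses a point the paper itself passes over silently, and does not change the substance of the argument.
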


\section{The case of a quasi-homogenous surface}

In this section, we compute the Lie-Rinehart cohomology $\operatorname{H}%
_{\operatorname{Rin}}^{\ast}(\operatorname{Der}_{k}(R),R)$ in the case of a
integral quasi-homogenous surface singularity $R=k[x_{1},x_{2},x_{3}]/(f).$ We
write $d_{i}=\deg x_{i}$ for $i=1,2,3$, $d=\deg f\geq2$ and put $\omega
_{i}=d_{i}/d$, $\delta=\omega_{1}+\omega_{2}+\omega_{3}-1.$

The Lie-Rinehart complex $\operatorname*{C}\nolimits^{\ast}=\operatorname*{C}%
\nolimits_{\operatorname*{Rin}}^{\ast}(\operatorname{Der}_{k}(R),R)$ in the
present case is given as%
\[
\operatorname*{C}\nolimits^{0}=R\overset{d^{0}}{\longrightarrow}%
\operatorname*{C}\nolimits^{1}=\operatorname{Hom}_{R}(\operatorname{Der}%
_{k}(R),R)\overset{d^{1}}{\longrightarrow}\operatorname*{C}\nolimits^{2}%
=\operatorname{Hom}_{R}(\wedge^{2}\operatorname{Der}_{k}(R),R)\rightarrow0
\]
since $\wedge^{3}\operatorname{Der}_{k}(R)$ is supported at the singular locus
of $\operatorname{Spec}(R).$ The map $d^{0}$ is given by $d^{0}(r)(D)=D(r)$
for $r\in R,$ $D\in\operatorname{Der}_{k}(R)$ and $d^{1}$ is given by%
\[
d^{1}(\varphi)((D_{1}\wedge D_{2}))=D_{1}(\varphi(D_{2}))-D_{2}(\varphi
(D_{1}))-\varphi([D_{1},D_{2}])
\]
for $\varphi\in\operatorname{Hom}_{R}(\operatorname{Der}_{k}(R),R)$ and
$D_{1},D_{2}\in\operatorname{Der}_{k}(R),$ Note that $\operatorname{H}%
_{\operatorname{Rin}}^{i}(\operatorname{Der}_{k}(R),R)=0$ for $i\geq3.$

It is clear that $\operatorname*{C}\nolimits^{1}=\operatorname{Hom}%
_{R}(\operatorname{Der}_{k}(R),R)$ and $\operatorname*{C}\nolimits^{2}%
=\operatorname{Hom}_{R}(\wedge^{2}\operatorname{Der}_{k}(R),R)$ are graded,
and that $d^{0}$ and $d^{1}$ are homogenous of degree zero. $\ $It is further
well-known that $\operatorname{Der}_{k}(R)$ is naturally generated by the
Euler derivation $E$ (homogenous of degree $0$) and the Kozul derivations
$D_{1},$ $D_{2},$ $D_{3}$ (homogenous of degree $d-d_{1}-d_{2},$
$d-d_{1}-d_{3},$ $d-d_{2}-d_{3}$ respectively), given by%
\begin{align*}
E  &  =\omega_{1}x_{1}\frac{\partial}{\partial x_{1}}+\omega_{2}x_{2}%
\frac{\partial}{\partial x_{2}}+\omega_{3}x_{3}\frac{\partial}{\partial x_{3}}
& D_{1}  &  =\frac{\partial f}{\partial x_{2}}\frac{\partial}{\partial x_{1}%
}-\frac{\partial f}{\partial x_{1}}\frac{\partial}{\partial x_{2}},\\
D_{2}  &  =\frac{\partial f}{\partial x_{3}}\frac{\partial}{\partial x_{1}%
}-\frac{\partial f}{\partial x_{1}}\frac{\partial}{\partial x_{3}}, & D_{3}
&  =\frac{\partial f}{\partial x_{3}}\frac{\partial}{\partial x_{2}}%
-\frac{\partial f}{\partial x_{2}}\frac{\partial}{\partial x_{3}}%
\end{align*}
To give a description of $C^{1}=\operatorname{Hom}_{R}(\operatorname{Der}%
_{k}(R),R),$ we define%
\[
\varphi=\left(
\begin{array}
[c]{cccc}%
f_{1} & f_{2} & f_{3} & 0\\
\omega_{2}x_{2} & -\omega_{1}x_{1} & 0 & f_{3}\\
\omega_{3}x_{3} & 0 & -\omega_{1}x_{1} & -f_{2}\\
0 & \omega_{3}x_{3} & -\omega_{2}x_{2} & f_{1}%
\end{array}
\right)  \text{ and }\psi=\left(
\begin{array}
[c]{cccc}%
\omega_{1}x_{1} & f_{2} & f_{3} & 0\\
\omega_{2}x_{2} & -f_{1} & 0 & f_{3}\\
\omega_{3}x_{3} & 0 & -f_{1} & -f_{2}\\
0 & \omega_{3}x_{3} & -\omega_{2}x_{2} & \omega_{1}x_{1}%
\end{array}
\right)
\]
where $f_{i}=\partial f/\partial x_{i}.$

\begin{lemma}
The matrices $(\varphi,\psi)$ give a matrix factorization of
$\operatorname{Der}_{k}(R)$ and the transposed matrices $(\varphi^{T},\psi
^{T})$ give a matrix factorization of $\operatorname{Hom}_{R}%
(\operatorname{Der}_{k}(R),R).$
\end{lemma}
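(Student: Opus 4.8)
The plan is to treat the two assertions in turn, reducing the first to a direct identity together with a cokernel identification, and then deducing the second formally by duality. Throughout I use the convention that a matrix factorization $(\varphi,\psi)$ of an $R$-module $N$ consists of square matrices over $S=k[x_1,x_2,x_3]$ satisfying $\varphi\psi=\psi\varphi=f\cdot I_4$ together with an identification $N\cong\operatorname{coker}(\overline{\varphi})$, where an overbar denotes reduction modulo $f$; recall that such a pair yields a $2$-periodic free resolution $\cdots\to R^4\xrightarrow{\overline{\psi}}R^4\xrightarrow{\overline{\varphi}}R^4\to N\to 0$ and that $\operatorname{coker}(\overline{\varphi})$ is then a maximal Cohen--Macaulay $R$-module by Eisenbud's theorem.

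First I would verify the factorization identity $\varphi\psi=\psi\varphi=f\,I_4$ by direct matrix multiplication. This is the routine computational core: every entry collapses either to the trivial commutation $f_if_j=f_jf_i$ or to the quasi-homogeneous Euler relation $\omega_1x_1f_1+\omega_2x_2f_2+\omega_3x_3f_3=f$ arising from $f$ being homogeneous of degree $d$ (for instance all the diagonal entries reduce to this relation). The same relation, together with the fact that transposing $\varphi\psi=\psi\varphi=fI_4$ preserves it, is what will make the second statement work.

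Next I would identify $\operatorname{coker}(\overline{\varphi})$ with $\der_k(R)$. Writing $A$ for the $3\times 4$ matrix formed by the first three rows of $\psi$, its four columns are exactly the coordinate vectors of $E,D_1,D_2,D_3$ in $\der_k(S)\otimes_S R=R^3$, so $A\colon R^4\to R^3$ surjects onto $\der_k(R)$. Since $\psi\varphi=fI_4$ reduces to $0$ over $R$, its first three rows give $A\overline{\varphi}=0$, hence $A$ factors through a surjection $\overline{A}\colon\operatorname{coker}(\overline{\varphi})\twoheadrightarrow\der_k(R)$. To promote $\overline{A}$ to an isomorphism I would compare ranks. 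On the smooth locus $\der_k(R)$ is the tangent sheaf of a surface, hence of rank $2$; on the other hand, exactness of the matrix-factorization complex at the generic point gives $\operatorname{rank}\operatorname{coker}(\overline{\varphi})=\operatorname{rank}_Q\overline{\psi}$, and a short computation over $Q=\operatorname{Frac}(R)$ shows that the fourth row $(0,\omega_3x_3,-\omega_2x_2,\omega_1x_1)$ of $\psi$ lies in the $Q$-span of the rows of $A$ (solving the resulting linear system forces precisely the Euler expression, which vanishes in $R$). Thus $\operatorname{rank}_Q\overline{\psi}=\operatorname{rank}_Q A=2$, so both modules have rank $2$. The hard part is exactly this injectivity step, but it now follows cleanly: $\ker(\overline{A})$ is a rank-$0$, hence torsion, submodule of $\operatorname{coker}(\overline{\varphi})$, which is torsion-free because it is maximal Cohen--Macaulay of depth $2$ over the domain $R$; therefore $\ker(\overline{A})=0$ and $\overline{A}$ is an isomorphism, establishing that $(\varphi,\psi)$ is a matrix factorization of $\der_k(R)$.

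Finally, the transposed statement I would obtain formally. Transposing $\varphi\psi=\psi\varphi=fI_4$ gives $\varphi^{T}\psi^{T}=\psi^{T}\varphi^{T}=fI_4$, so $(\varphi^{T},\psi^{T})$ is again a matrix factorization of $f$. Since $\der_k(R)=\operatorname{coker}(\overline{\varphi})$ is maximal Cohen--Macaulay over the Gorenstein (hypersurface) ring $R$, one has $\operatorname{Ext}^{i}_R(\der_k(R),R)=0$ for $i>0$, so applying $\hmm_R(-,R)$ to the $2$-periodic resolution remains exact and, using exactness of the transposed matrix-factorization complex, yields $\hmm_R(\der_k(R),R)\cong\operatorname{coker}(\overline{\varphi^{T}})$. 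This is precisely the assertion that $(\varphi^{T},\psi^{T})$ is a matrix factorization of $\hmm_R(\der_k(R),R)$, which completes the argument.
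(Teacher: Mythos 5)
Your argument is correct, but it takes a genuinely different route from the paper's: the paper gives no direct argument at all, instead citing Lemma 1.5 of Yoshino--Kawamoto and Proposition 2.1 of Behnke for the factorization of $\der_{k}(R)$, and Lemma 11 of Eriksen--Gustavsen for the transposed statement. You verify everything from scratch. The identity $\varphi\psi=\psi\varphi=f\,I_{4}$ does reduce, exactly as you say, to commutativity of the $f_{i}$ and the Euler relation $\omega_{1}x_{1}f_{1}+\omega_{2}x_{2}f_{2}+\omega_{3}x_{3}f_{3}=f$ (I checked all sixteen entries of both products). The first three rows of $\psi$ do induce a surjection $\operatorname{coker}(\overline{\varphi})\twoheadrightarrow\der_{k}(R)$; note that here you use that $E,D_{1},D_{2},D_{3}$ generate $\der_{k}(R)$, which is not proved by you either, but the paper records it as well known immediately before the lemma, so it is a legitimate shared input. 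Your rank count over $Q$ is also right: the fourth row of $\overline{\psi}$ equals $-\frac{\omega_{3}x_{3}}{f_{1}}$ times the second row plus $\frac{\omega_{2}x_{2}}{f_{1}}$ times the third, precisely because the Euler expression vanishes in $R$, so $\operatorname{rank}_{Q}\overline{\psi}=2=\operatorname{rank}\der_{k}(R)$, and torsion-freeness of the maximal Cohen--Macaulay module $\operatorname{coker}(\overline{\varphi})$ over the domain $R$ kills the rank-zero kernel. The dual statement via $\operatorname{Ext}^{i}_{R}(M,R)=0$ for $M$ maximal Cohen--Macaulay over the hypersurface (hence Gorenstein) ring $R$, giving $\hmm_{R}(\der_{k}(R),R)\cong\ker(\overline{\varphi}^{T})=\operatorname{im}(\overline{\psi}^{T})\cong\operatorname{coker}(\overline{\varphi}^{T})$, is likewise sound and is consistent with the paper's subsequent identification of the rows $\psi^{(i)}$ as generators of $\hmm_{R}(\der_{k}(R),R)$ inside $R^{4}$. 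As for what each approach buys: the paper's citations are shorter and carry graded information that gets reused later --- Behnke's proposition identifies $\hmm_{R}(\der_{k}(R),R)$ with the fundamental module and $\hmm_{R}(\wedge^{2}\der_{k}(R),R)$ with the canonical module as graded modules, which is exactly what the proof of Theorem \ref{qh_case} invokes --- whereas your proof is self-contained, with the only nonelementary inputs being Eisenbud's theorem on matrix factorizations and Gorenstein duality for maximal Cohen--Macaulay modules, both standard.
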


\begin{proof}
This follows from Lemma 1.5 in Yoshino and Kawamoto \cite{YosKaw:88} and
Proposition 2.1 in Behnke \cite{Beh:89}. For the last part see for instance
Lemma 11 in Eriksen and Gustavsen \cite{EriGus:08}.
\end{proof}

Mapping $\operatorname{Hom}_{R}(\operatorname{Der}_{k}(R),R)$ into $R^{4}$ by
evaluation on $(E,D_{1},D_{2},D_{3}),$ we obtain the rows $\psi^{(i)}$ in
$\psi$ as generators for $\operatorname{Hom}_{R}(\operatorname{Der}(R),R)$ in
$R^{4}.$ We see that $\deg\psi^{(i)}=d_{i}$ for $i=1,2,3,$ and $\deg\psi
^{(4)}=d_{1}+d_{2}+d_{3}-d=d\delta.$

To give a description of $\operatorname*{C}\nolimits^{2}=\operatorname{Hom}%
_{R}(\wedge^{2}\operatorname{Der}_{k}(R),R),$ we consider the element
$\Delta=\frac{\partial f}{\partial x_{3}}\frac{\partial}{\partial x_{1}}%
\wedge\frac{\partial}{\partial x_{2}}-\frac{\partial f}{\partial x_{2}}%
\frac{\partial}{\partial x_{1}}\wedge\frac{\partial}{\partial x_{3}}%
+\frac{\partial f}{\partial x_{1}}\frac{\partial}{\partial x_{2}}\wedge
\frac{\partial}{\partial x_{3}}$ of degree $d-d_{1}-d_{2}-d_{3}=-d\delta.$ A
calculation gives%
\begin{align*}
E\wedge D_{1}  &  =\omega_{3}x_{3}\Delta, & E\wedge D_{2}  &  =-\omega
_{2}x_{2}\Delta, & E\wedge D_{3}  &  =\omega_{1}x_{1}\Delta,\\
D_{1}\wedge D_{2}  &  =\frac{\partial f}{\partial x_{1}}\Delta, & D_{1}\wedge
D_{3}  &  =\frac{\partial f}{\partial x_{2}}\Delta, & D_{2}\wedge D_{3}  &
=\frac{\partial f}{\partial x_{3}}\Delta,
\end{align*}
and we conclude that $\wedge^{2}\operatorname{Der}_{k}(R)=(x_{1},x_{2}%
,x_{3})\Delta.$ From this, we get the following isomorphisms of graded
modules:%
\[
\operatorname{Hom}_{R}(\wedge^{2}\operatorname{Der}_{k}%
(R),R)=\operatorname{Hom}_{R}(\mathfrak{m}\Delta,R)\cong\operatorname{Hom}%
_{R}(R\Delta,R)\cong R[-\deg\Delta].
\]
We compute the map $d^{1}$ and get%

\begin{align*}
d^{1}(r\psi^{(1)})(E\wedge D_{3}) &  =E(r\psi^{(1)})-D_{3}(r\psi
^{(1)}(E))-r\psi^{(1)}([E,D_{3}])=-\omega_{1}x_{1}D_{3}(r)\\
d^{1}(r\psi^{(2)})(E\wedge D_{2}) &  =E(r\psi^{(2)})-D_{2}(r\psi
^{(2)})(E)-r\psi^{(2)}([E,D_{2}])=-\omega_{2}x_{2}D_{2}(r)\\
d^{1}(r\psi^{(3)})(E\wedge D_{1}) &  =E(r\psi^{(3)})-D_{1}(r\psi
^{(3)})(E)-r\psi^{(3)}([E,D_{1}])=-\omega_{3}x_{3}D_{1}(r)\\
d^{1}(r\psi^{(4)})(E\wedge D_{1}) &  =E(r\psi^{(4)})-D_{1}(r\psi
^{(4)})(E)-r\psi^{(4)}([E,D_{1}])=\omega_{3}x_{3}(E(r)+\delta r)
\end{align*}
using%
\[
\lbrack E,D_{1}]=(1-\omega_{1}-\omega_{2})D_{1},\text{ \ }[E,D_{2}%
]=(1-\omega_{1}-\omega_{3})D_{2},\text{ }[E,D_{3}]=(1-\omega_{2}-\omega
_{3})D_{3}%
\]
From this we conclude that%
\begin{align*}
d^{1}(r\psi^{(1)})(\Delta) &  =-D_{3}(r) & d^{1}(r\psi^{(2)})(\Delta) &
=D_{2}(r)\\
d^{1}(r\psi^{(3)})(\Delta) &  =-D_{1}(r) & d^{1}(r\psi^{(4)})(\Delta) &
=E(r)+\delta r,
\end{align*}
and in conclusion we have reached a very concrete description of the
Lie-Rinehart complex $\operatorname*{C}\nolimits_{\operatorname*{Rin}}^{\ast
}(\operatorname{Der}_{k}(R),R).$ Using this, we are able to prove the
following result:

\begin{theorem}
\label{qh_case}Let $R=k[x_{1},x_{2},x_{3}]/(f)$ be an integral
quasi-homogenous surface singularity. Then the grading on $R$ induces a
grading on $\operatorname{H}_{\operatorname{Rin}}^{\ast}(\operatorname{Der}%
_{k}(R),R),$ and
\begin{align*}
\operatorname{H}_{\operatorname{Rin}}^{0}(\operatorname{Der}_{k}(R),R)  &
=\operatorname{H}_{\operatorname{Rin}}^{0}(\operatorname{Der}_{k}%
(R),R)_{0}\cong R_{0}=k\\
\operatorname{H}_{\operatorname{Rin}}^{i}(\operatorname{Der}_{k}(R),R)  &
=\operatorname{H}_{\operatorname{Rin}}^{i}(\operatorname{Der}_{k}%
(R),R)_{0}\cong R_{d-d_{1}-d_{2}-d_{3}}\text{ for }i=1,2\\
\operatorname{H}_{\operatorname{Rin}}^{i}(\operatorname{Der}_{k}(R),R)  &
=0\text{ for }i\geq3,
\end{align*}
as graded $k$-vector space, where $d_{i}=\deg x_{i}$ for $i=1,2,3$ and $d=\deg
f\geq2.$
\end{theorem}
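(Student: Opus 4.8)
The plan is to compute the cohomology of the explicit three-term complex $\operatorname{C}^0=R\xrightarrow{d^0}\operatorname{C}^1\xrightarrow{d^1}\operatorname{C}^2\to 0$ assembled above, using that every term is graded and $d^0,d^1$ are homogeneous of degree zero, so that each $\operatorname{H}^i_{\operatorname{Rin}}(\der_k(R),R)$ is graded and can be analysed degree by degree. The vanishing for $i\geq 3$ is immediate since $\operatorname{C}^i=0$ there. For $\operatorname{H}^0$ I would observe that $d^0(r)=0$ forces $E(r)=0$; as $E$ acts on the degree-$w$ piece $R_w$ as multiplication by $w/d$ and $\operatorname{char}k=0$, this gives $r\in R_0=k$, and constants are annihilated by every derivation, so $\operatorname{H}^0\cong R_0=k$, concentrated in degree $0$.

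The crucial structural step is to show that $\operatorname{H}^1$ and $\operatorname{H}^2$ are concentrated in degree $0$. For this I would use the Cartan homotopy formula $L_E=\iota_E\circ d+d\circ\iota_E$ on the Lie-Rinehart complex, where $\iota_E\colon\operatorname{C}^n\to\operatorname{C}^{n-1}$ is contraction with the Euler derivation, $(\iota_E\xi)(D_1\wedge\cdots\wedge D_{n-1})=\xi(E\wedge D_1\wedge\cdots\wedge D_{n-1})$, and $L_E$ is the induced Lie derivative on cochains. Combining the action of $E$ on $R_w$ as multiplication by $w/d$ with the relations $[E,D]=(\deg D/d)\,D$ for homogeneous $D$ (which give exactly the brackets recorded above), a direct check shows that $L_E$ acts on a homogeneous cochain of degree $w$ as multiplication by $w/d$. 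Since $L_E$ is null-homotopic, it acts as zero on cohomology, so $\operatorname{H}^n_{\operatorname{Rin}}(\der_k(R),R)_w=0$ for all $w\neq 0$ and all $n$, reducing everything to degree $0$.

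It then remains to treat degree $0$, where I would use the concrete formulas for $d^1$ together with the generators $\psi^{(1)},\dots,\psi^{(4)}$ of $\operatorname{C}^1$ of degrees $d_1,d_2,d_3$ and $d\delta$. In degree $0$ only $\psi^{(4)}$ survives: a homogeneous element $\sum_i r_i\psi^{(i)}\in\operatorname{C}^1_0$ has $\deg r_i=-\deg\psi^{(i)}$, forcing $r_i\in R_{-d_i}=0$ for $i=1,2,3$ and $r_4\in R_{-d\delta}$, so $\operatorname{C}^1_0=R_{-d\delta}\,\psi^{(4)}$. Because $d^1(r_4\psi^{(4)})(\Delta)=(E+\delta)(r_4)$ and $E+\delta$ annihilates $R_{-d\delta}$ (its eigenvalue there is $-\delta+\delta=0$), the map $d^1$ vanishes on $\operatorname{C}^1_0$. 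For $\operatorname{H}^2$ this yields $(\operatorname{im}d^1)_0=0$, hence $\operatorname{H}^2=\operatorname{C}^2_0\cong R_{-d\delta}=R_{d-d_1-d_2-d_3}$ under the evaluation isomorphism $\operatorname{C}^2\cong R$, $\xi\mapsto\xi(\Delta)$. For $\operatorname{H}^1$ I would note $(\ker d^1)_0=\operatorname{C}^1_0$ while $(\operatorname{im}d^0)_0=d^0(R_0)=0$, so $\operatorname{H}^1=\operatorname{C}^1_0$; and $r_4\mapsto r_4\psi^{(4)}$ is injective since the row $\psi^{(4)}=(0,\omega_3x_3,-\omega_2x_2,\omega_1x_1)$ in $R^4$ and $R$ is a domain, giving $\operatorname{H}^1\cong R_{-d\delta}$ as well.

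The main obstacle is the concentration step: one must install the contraction $\iota_E$ and verify Cartan's formula in the Lie-Rinehart rather than the classical de Rham setting, and then track the eigenvalue of $L_E$ carefully through both the $R$-action on coefficients and the adjoint action on $\wedge^n\der_k(R)$. If one wishes to bypass the homotopy formula, the alternative is to verify concentration by Hilbert series: for each $w\neq 0$ one checks that $d^0$ is injective and $d^1$ surjective (its $\psi^{(4)}$-component is $(w/d)\cdot\mathrm{id}$, invertible for $w\neq 0$) and then matches $\dim_k\operatorname{C}^1_w=\dim_k R_w+\dim_k R_{w-d\delta}$ from the matrix factorization $(\varphi^T,\psi^T)$ of $\operatorname{C}^1$; but extracting that dimension identity is precisely where the computation turns heavy, so the homotopy-formula route is the one I would pursue.
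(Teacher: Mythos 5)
Your proposal is correct, and it diverges from the paper's proof at exactly the step you flagged as the crux. The two arguments share their endpoints: both treat $\operatorname{H}^{0}$ by the Euler eigenvalue argument, get vanishing for $i\geq3$ from $\operatorname{C}^{i}=0$, and compute the degree-zero layer from $\operatorname{C}^{1}_{0}=R_{-d\delta}\,\psi^{(4)}$ together with $d^{1}(r\psi^{(4)})(\Delta)=(E+\delta)(r)$. But the concentration in degree zero is handled quite differently. The paper does it in two separate pieces: for $\operatorname{H}^{2}$ it reads off directly from the formulas $d^{1}(r\psi^{(i)})(\Delta)=\pm D_{j}(r)$ and $(E+\delta)(r)=(\tfrac{\deg r}{d}+\delta)r$ that $\operatorname{im}d^{1}=\operatorname{C}^{2}_{\neq0}$ (the $D_{j}(r)$ land in strictly positive degrees, and the $\psi^{(4)}$-component sweeps out every nonzero degree); for $\operatorname{H}^{1}_{\omega}$, $\omega\neq0$, it reduces to the Euler characteristic $\dim_{k}\operatorname{C}^{1}_{\omega}-\dim_{k}\operatorname{C}^{2}_{\omega}-\dim_{k}\operatorname{C}^{0}_{\omega}$ and kills it using the graded Auslander sequence $0\to\omega_{R}\to E\to\mathfrak{m}\to0$, with Behnke's identifications of $\operatorname{C}^{2}$ with the canonical module and $\operatorname{C}^{1}$ with the fundamental module, plus a homogeneity argument via $\operatorname{Ext}^{2}_{R}(R/\mathfrak{m},\omega_{R})\cong R/\mathfrak{m}$ from Bruns--Herzog. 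Your Cartan homotopy $L_{E}=\iota_{E}d+d\iota_{E}$ replaces both pieces at once, and it does work: the identity is the standard Chevalley--Eilenberg computation (with the paper's sign conventions the check is short, e.g.\ for $n=2$ the bracket terms $\xi([D_{1},D_{2}]\wedge E)$ and $\xi(E\wedge[D_{1},D_{2}])$ cancel), and all three operators preserve $R$-multilinear cochains because the standard action of $\der_{k}(R)$ on $R$ is flat, so the identity restricts from the Chevalley--Eilenberg complex to the Lie--Rinehart subcomplex; the eigenvalue claim follows from $E(r)=(\deg r/d)\,r$ and $[E,D]=(\deg D/d)\,D$ for homogeneous $D$. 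What your route buys is uniformity (all cohomological degrees at once, and it would apply verbatim to any graded Lie--Rinehart situation containing the Euler derivation) and independence from the commutative-algebra input --- the matrix factorization of $\operatorname{C}^{1}$ is then needed only in degree zero, and the Auslander sequence not at all; what it costs is the one-time verification of the Cartan identity in this setting. Note finally that your ``Hilbert series'' fallback, which you correctly predicted would turn heavy, is precisely the paper's actual argument.
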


\begin{proof}
For simplicity, we write $\operatorname{H}^{i}=\operatorname{H}%
_{\operatorname{Rin}}^{i}(\operatorname{Der}_{k}(R),R)$ for $i\geq0.$ We have
that $\operatorname{H}^{0}=\{r\in R:D(r)=0\;\forall D\in\operatorname{Der}%
_{k}(R)\}.$ If $r\in\operatorname{H}_{\omega}^{0},$ then $E(r)=\omega r=0$
implies that $\omega=0$ or $r=0.$ Thus $\operatorname{H}^{0}=R_{0}=k.$

We have that $\operatorname{H}^{2}=\operatorname{coker}d^{1}=$
$\operatorname*{C}\nolimits^{2}/\operatorname{im}d^{1}\cong R[-d\delta
]/\operatorname{im}d^{1}$ and $\operatorname{im}d^{1}$ is spanned by
$D_{i}(r)$ for $i=1,2,3$ and $E(r)+\delta r=(\frac{\deg r}{d}+\delta)r$ as $r$
runs through all homogenous elements in $R.$ Since $\deg D_{i}=-d\delta
+d_{i},$ for $i=1,2,3,$ it follows that $D_{i}(r)\in R_{-d\delta+d_{i}+\deg
r}=\operatorname*{C}\nolimits_{{}_{d_{i}+\deg r}}^{2}$ for $i=1,2,3$.
Furthermore, $\frac{\deg r}{d}+\delta=0$ if and only if $\deg r=-\delta d$. We
conclude that $\operatorname{im}d^{1}=\operatorname*{C}\nolimits_{\neq0}^{2}$,
and hence $\operatorname{H}^{2}=\operatorname*{C}\nolimits_{0}^{2}\cong
R_{d-d_{1}-d_{2}-d_{2}}.$

To compute $\operatorname{H}^{1},$ we note that $\operatorname{im}d_{0}^{0}=0$
and $\ker d_{0}^{1}\cong R_{-d\delta}\cdot\psi^{(4)}$ by the argument above.
It follows that $\operatorname{H}_{0}^{1}\cong R_{-d\delta}\cdot\psi
^{(4)}\cong R_{d-d_{1}-d_{2}-d_{2}}.$ We claim that $\operatorname{H}_{\omega
}^{1}=0$ for $\omega\neq0.$ To prove the claim, we first note that since
$\operatorname{H}_{\omega}^{2}=0$ for $\omega\neq0,$ it follows that
$d_{\omega}^{1}$ induces an isomorphism $\operatorname*{C}\nolimits_{\omega
}^{1}/\ker d_{\omega}^{1}\cong\operatorname*{C}\nolimits_{\omega}^{2}$ for
$\omega\neq0$. Also, $\operatorname{im}d_{\omega}^{0}\cong\operatorname*{C}%
\nolimits_{\omega}^{0}$ for $\omega\neq0.$ Thus%
\[
\dim_{k}\operatorname{H}_{\omega}^{1}=\dim_{k}\ker d_{\omega}^{1}-\dim
_{k}\operatorname{im}d_{\omega}^{0}=\dim_{k}\operatorname*{C}\nolimits_{\omega
}^{1}-\dim_{k}\operatorname*{C}\nolimits_{\omega}^{2}-\dim_{k}%
\operatorname*{C}\nolimits_{\omega}^{0}%
\]
for $\omega\neq0.$ To compute these dimensions, recall the Auslander sequence
\[
0\rightarrow\omega_{R}\rightarrow E\rightarrow\mathfrak{m}\rightarrow0
\]
where $\omega_{R}$ is the canonical module, $E$ is the fundamental module and
$\mathfrak{m}$ is the maximal graded ideal of $R.$ From (the proof of)
Proposition 2.1 in Behnke \cite{Beh:89}, we have that $\omega_{R}%
\cong\operatorname{Hom}_{R}(\wedge^{2}\operatorname{Der}_{k}%
(R),R)=\operatorname*{C}\nolimits^{2}$ and $E\cong\operatorname{Hom}%
_{R}(\operatorname{Der}_{k}(R),R)=\operatorname*{C}\nolimits^{1}$ as graded
modules, since $R$ is quasi-homogenous, see also Lemma 1.2 in Yoshino and
Kawamoto \cite{YosKaw:88}. Since there are homogenous isomorphisms
$\operatorname{Ext}_{R}^{1}(\mathfrak{m},\omega_{R})\cong\operatorname{Ext}%
_{R}^{2}(R/\mathfrak{m},\omega_{R})\cong R/\mathfrak{m}$ of degree zero, see
Definition 3.6.8, Example 3.6.10 and Proposition 3.6.12 in Bruns and Herzog
\cite{BruHer:93}, it follows that the Auslander sequence is homogenous of
degree zero. Thus $\dim\operatorname*{C}\nolimits_{\omega}^{1}=\dim
\operatorname*{C}\nolimits_{\omega}^{2}+\dim\operatorname*{C}\nolimits_{\omega
}^{0}$ for $\omega\neq0.$ This proves the claim that $\operatorname{H}%
_{\omega}^{1}=0$ for $\omega\neq0.$
\end{proof}

\begin{remark}
It follows from Theorem \ref{Huang_etal} that $\operatorname{H}%
_{\operatorname{Rin}}^{1}(\operatorname{Der}_{k}(R),R)\cong\operatorname{H}%
_{\operatorname{Rin}}^{2}(\operatorname{Der}_{k}(R),R)$ when $R\cong
k[x_{1},x_{2}x_{3}]/(f)$ is a quasi-homogenous surface singularity, since it
is known that $\mu=\tau$ in the quasi-homogenous case. From Theorem
\ref{qh_case} it follows that $\operatorname{H}_{\operatorname{Rin}}%
^{1}(\operatorname{Der}_{k}(R),R)\cong\operatorname{H}_{\operatorname{Rin}%
}^{2}(\operatorname{Der}_{k}(R),R)$ as graded $k$-vector spaces.
\end{remark}

\begin{remark}
We see that all cohomology is consentrated in degree $0$ in the case covered
by the theorem. It follows from the proof of Proposition \ref{int_class} that
integrability class $\ic(M)$ lies in $\operatorname{H}_{\operatorname{Rin}%
}^{2}(\operatorname{Der}_{k}(R),R)_{0}$ for any graded torsion free rank one
module $M.$ Moreover, if $\ic(M)=0,$ it follows form the proof of Theorem
\ref{torsor} and Theorem \ref{qh_case} that $\operatorname{H}%
_{\operatorname{Rin}}^{1}(\operatorname{Der}_{k}(R),R)_{0}$ is a moduli space
for integrable connections. Hence up to analytic equivalence, all integrable
connections are homogenous.
\end{remark}

\begin{example}
We consider the singularity $R=\mathbb{C}[x_{1},x_{2},x_{3}]/(f),$ where $f$
is homogenous of degree $d,$ with $d_{1}=d_{2}=d_{3}=1.$ Then
$\operatorname{H}_{\operatorname{Rin}}^{i}(\operatorname{Der}_{k}(R),R)\cong
R_{d-1-1-1}=R_{d-3}$ so that%
\[
\dim_{\mathbb{C}}\operatorname{H}_{\operatorname{Rin}}^{i}(\operatorname{Der}%
_{k}(R),R)=\binom{d-3+2}{d-3}=\binom{d-1}{d-3}=\frac{(d-1)(d-2)}{2}%
\]
for $i=1,2.$ This number is the genus of the curve $V(f)$ in $\mathbb{P}^{2},$
and therefore also the genus the exceptional curve in the minimal resolution
of $R.$
\end{example}

\begin{example}
The minimally elliptic singularity $\mathbb{C}[x_{1},x_{2},x_{3}]/(x_{1}%
^{3}+x_{2}^{4}+x_{3}^{4})$ has $d=12,d_{1}=4,d_{2}=d_{3}=3,$ so
$\operatorname{H}_{\operatorname{Rin}}^{i}(\operatorname{Der}_{k}(R),R)\cong
R_{12-4-3-3}=R_{2}=0$ for $i=1,2.$
\end{example}

\begin{corollary}
Let $R=k[x_{1},x_{2},x_{3}]/(f)$ be an integral quasi-homogeneous surface
singularity, and let $M$ be any finitely generated torsion free graded
$R$-module of rank one. Then any homogenous connection on $M$ is integrable.
\end{corollary}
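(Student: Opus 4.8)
The plan is to prove the stronger statement that the curvature $K_{\nabla}$ of a homogeneous connection $\nabla$ vanishes outright, not merely that its class $\ic(M)$ is zero. Since $R$ is an integral quasi-homogeneous surface singularity it is normal, so by Proposition \ref{end_con} we have $\enm_{R}(M)=R$ and the induced connection on $\enm_{R}(M)$ is the standard action of $\der_{k}(R)$ on $R$; in particular the curvature is an $R$-linear map $K_{\nabla}:\wedge^{2}\der_{k}(R)\rightarrow R$. The first step is to exploit the explicit identities $E\wedge D_{1}=\omega_{3}x_{3}\Delta$, $E\wedge D_{2}=-\omega_{2}x_{2}\Delta$, $E\wedge D_{3}=\omega_{1}x_{1}\Delta$ established above: because $x_{1}\Delta,x_{2}\Delta,x_{3}\Delta$ generate $\mathfrak{m}\Delta=\wedge^{2}\der_{k}(R)$, it is enough to show $K_{\nabla}(E\wedge D_{i})=0$ for $i=1,2,3$.

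The crucial input is the behaviour of $\nabla$ on the Euler derivation $E$. I would first note that the Euler operator $\epsilon$ on $M$, given by $\epsilon(m)=(\deg m/d)\,m$ on homogeneous elements, satisfies the Leibniz rule $\epsilon(rm)=r\epsilon(m)+E(r)m$ and hence is itself a valid value of a connection at $E$. Consequently $\nabla_{E}-\epsilon$ is $R$-linear and homogeneous of degree zero, so it lies in $\enm_{R}(M)_{0}=R_{0}=k$. Thus $\nabla_{E}=\epsilon+c$ for a central scalar $c\in k$. This is exactly where the rank-one hypothesis is essential: for a module of higher rank the difference $\nabla_{E}-\epsilon$ would lie in a noncommutative endomorphism algebra and need not commute with $\nabla_{D_{i}}$, so the cancellation below would fail.

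With $\nabla_{E}$ pinned down, the curvature becomes a short bracket computation. For each $i$ the derivation $\nabla_{D_{i}}$ raises degree by $\deg D_{i}$, so $[\epsilon,\nabla_{D_{i}}]=(\deg D_{i}/d)\,\nabla_{D_{i}}$, and since $c$ is central we obtain $[\nabla_{E},\nabla_{D_{i}}]=(\deg D_{i}/d)\,\nabla_{D_{i}}$. On the other hand the relations $[E,D_{i}]=(\deg D_{i}/d)\,D_{i}$ recorded above (e.g.\ $[E,D_{1}]=(1-\omega_{1}-\omega_{2})D_{1}$) give $\nabla_{[E,D_{i}]}=(\deg D_{i}/d)\,\nabla_{D_{i}}$. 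Therefore
\[
K_{\nabla}(E\wedge D_{i})=[\nabla_{E},\nabla_{D_{i}}]-\nabla_{[E,D_{i}]}=0
\]
for $i=1,2,3$. Combined with the generation statement of the first step, this forces $K_{\nabla}=0$, so $\nabla$ is integrable; in particular $\ic(M)=[K_{\nabla}]=0$, in agreement with the preceding remark.

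I expect the main obstacle to be the identification $\nabla_{E}=\epsilon+c$, which is where normality, rank one, and the hypothesis $R_{0}=k$ all enter: one must check that $\epsilon$ really satisfies the Leibniz rule for $E$ and that the degree-zero part of $\enm_{R}(M)$ reduces to scalars. Once this is settled, the bracket cancellation and the passage from the three generators $E\wedge D_{i}$ to all of $\wedge^{2}\der_{k}(R)$ are routine.
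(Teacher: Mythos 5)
Your proof is correct, and its skeleton is the same as the paper's: use homogeneity to pin down the value of the connection at the Euler derivation up to an additive scalar, then kill the curvature on the elements $E\wedge D_{i}$, which suffices because $\wedge^{2}\der_{k}(R)=\mathfrak{m}\Delta$. The differences are in execution, and yours is leaner at each step. Where you define the Euler operator $\epsilon(m)=(\deg m/d)\,m$ intrinsically on the graded module and verify the Leibniz rule directly, the paper builds the same operator (up to the constant $\deg(e_{1})/d$) from a graded free presentation, as $E+P$ for an explicit diagonal matrix $P$, and must check $E(d_{0})+Pd_{0}=d_{0}Q$ to see that it descends to $M$; your route skips that bookkeeping, while the paper's explicit matrix form is what one actually implements in computer algebra. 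For the curvature, your identity $[\epsilon,T]=(\deg T/d)\,T$ for a homogeneous operator $T$ is the coordinate-free version of the paper's entry-by-entry cancellation $E(Q)+[P,Q]-\tfrac{1}{d}(d-d_{1}-d_{2})Q=0$, so the two computations are the same identity in different clothes. Finally, you check vanishing on all three generators $E\wedge D_{1},E\wedge D_{2},E\wedge D_{3}$, which needs only $R$-linearity of $K_{\nabla}$, whereas the paper checks only $E\wedge D_{1}$ and then uses $\hmm_{R}(\wedge^{2}\der_{k}(R),R)\cong R[-\deg\Delta]$ and the fact that $R$ is a domain; your reduction is the more robust of the two. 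One small caveat: your appeal to Proposition \ref{end_con}(2) needs $R$ normal, which holds when the quasi-homogeneous surface singularity is isolated (the situation intended in this section, by Serre's criterion); the paper asserts $\enm_{R}(M)_{0}=R_{0}=k$ at the same point without comment, so you are at the same level of rigor, and your closing remark that rank one enters precisely through the centrality of $\enm_{R}(M)_{0}$ matches the role of the scalar $\lambda I$ in the paper's argument.
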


\begin{proof}
Let $\nabla$ be an arbitrary homogenous connection on $M$, and let
\[
0\leftarrow M\leftarrow L_{0}\xleftarrow{d_0}L_{1}%
\]
be a graded presentation of $M$, where $\{e_{i}\}$ and $\{f_{i}\}$ are
homogeneous bases of $L_{0}$ and $L_{1}$, and $d_{0}=(a_{ij})$ is the matrix
of $d_{0}$ with respect to these bases. Then we have $\deg(a_{ij})=\deg
(f_{j})-\deg(e_{i})$ for all $i,j$. We consider the diagonal matrix $P$ with
entries $\epsilon_{j}=(\deg(e_{j})-\deg(e_{1}))/d$ on the diagonal. Since we
have
\[
E(d_{0})=\frac{1}{d}\deg(a_{ij})(a_{ij})=\frac{1}{d}(\deg(f_{j})-\deg
(e_{i}))(a_{ij}),
\]
we see that $E(d_{0})+Pd_{0}=d_{0}Q$ for some $Q\in\enm_{R}(L_{1}).$
Therefore, $\nabla_{E}^{\prime}=E+P\in\enm_{k}(L_{0})$ induces an operator
$\nabla_{E}^{\prime}\in\enm_{k}(M)$ such that $\nabla_{E}^{\prime
}(rm)=E(r)m+r\nabla_{E}^{\prime}(m)$ for all $r\in R$ and $m\in M.$ Since
$\nabla_{E}-\nabla_{E}^{\prime}\in\enm_{R}(M)_{0}=R_{0}=k$, it follows that
$\nabla_{E}=E+P+\lambda I$ for some $\lambda\in k$.

We claim that the curvature $K_{\nabla}=0$. Since $K_{\nabla}\in
\hmm_{R}(\wedge^{2}\der_{k}(R),R)$, it follows from the calculations preceding
Theorem \ref{qh_case} that it is enough to show that $K_{\nabla}(E\wedge
D_{1})=0.$ We also have $[E,D_{1}]=\frac{1}{d}(d-d_{1}-d_{2})D_{1}.$ Write
$\nabla_{D_{1}}=D_{1}+Q$, where $Q=(q_{ij})\in\enm_{R}(L_{0})$ and
$\deg(q_{ij})=\deg(e_{j})-\deg(e_{i})+(d-d_{1}-d_{2})$. Then:
\begin{align*}
K_{\nabla}(E\wedge D_{1}) &  =\nabla_{E}\nabla_{D_{1}}-\nabla_{D_{1}}%
\nabla_{E}-\nabla_{\lbrack E,D_{1}]}\\
&  =(E+P+\lambda I)(D_{1}+Q)-(D_{1}+Q)(E+P+\lambda I)\\
&  -\frac{1}{d}(d-d_{1}-d_{2})(D_{1}+Q)\\
&  =E(Q)-D_{1}(P+\lambda I)+[P+\lambda I,Q]-\frac{1}{d}(d-d_{1}-d_{2})Q\\
&  =E(Q)+[P,Q]-\frac{1}{d}(d-d_{1}-d_{2})Q
\end{align*}
A direct computation gives $[P,Q]=(-\frac{1}{d})(\deg(e_{j})-\deg
(e_{i}))(q_{ij})$, and we clearly have $E(Q)=\frac{1}{d}(d-d_{1}-d_{2}%
)Q+\frac{1}{d}(\deg(e_{j})-\deg(e_{i}))(q_{ij})$.
\end{proof}

\begin{example}
Let $R=k[x,y,z]/(x^{3}+y^{3}+z^{3}).$ The module $M$ with presentation matrix%
\[
\left(
\begin{array}
[c]{cc}%
x & -y^{2}+yz-z^{2}\\
y+z & x^{2}%
\end{array}
\right)
\]
is a maximal Cohen-Macaulay of rank one, see Laza et al \cite{LazPfiPop:02}.
The derivation module $\der_{k}(R)$ is generated by the four derivations%
\begin{align*}
E &  =x\frac{\partial}{\partial x}+y\frac{\partial}{\partial y}+z\frac
{\partial}{\partial z} & D_{1} &  =3y^{2}\frac{\partial}{\partial x}%
-3x^{2}\frac{\partial}{\partial y}\\
D_{2} &  =3z^{2}\frac{\partial}{\partial x}-3x^{2}\frac{\partial}{\partial z}
& D_{3} &  =3z^{2}\frac{\partial}{\partial y}-3y^{2}\frac{\partial}{\partial
z}%
\end{align*}
where $E$ is the Euler derivation and $D_{1},D_{2}$ and $D_{3}$ are the Kozul
derivations. Using our Singular \cite{gps05} library \textsc{Connections.lib}
\cite{EriGus:06b}, we find that a connection is represented by%
\begin{align*}
\nabla_{E} &  =E+\left(
\begin{array}
[c]{cc}%
\frac{2}{3} & 0\\
0 & \frac{2}{3}%
\end{array}
\right)   & \nabla_{D_{1}} &  =D_{1}+\left(
\begin{array}
[c]{cc}%
0 & 2x\\
-2y+z & 0
\end{array}
\right)  \\
\nabla_{D_{2}} &  =D_{2}+\left(
\begin{array}
[c]{cc}%
0 & 2x\\
y-2z & 0
\end{array}
\right)   & \nabla_{D_{3}} &  =D_{3}+\left(
\begin{array}
[c]{cc}%
-2y+2z & 0\\
0 & y-z
\end{array}
\right)  .
\end{align*}
Again using \cite{EriGus:06b}, we check that this is an integrable connection.
Further one finds that the connection represented by%
\begin{align*}
\nabla_{E}^{\prime} &  =E+\left(
\begin{array}
[c]{cc}%
\frac{2}{3} & 0\\
0 & \frac{2}{3}%
\end{array}
\right)   & \nabla_{D_{1}}^{\prime} &  =D_{1}+\left(
\begin{array}
[c]{cc}%
xz & 2x\\
-2y+z & xz
\end{array}
\right)  \\
\nabla_{D_{2}}^{\prime} &  =D_{2}+\left(
\begin{array}
[c]{cc}%
-xy & 2x\\
x^{2}+y-2z & xz
\end{array}
\right)   & \nabla_{D_{3}}^{\prime} &  =D_{3}+\left(
\begin{array}
[c]{cc}%
x^{2}-2y+2z & 0\\
0 & x^{2}+y-z
\end{array}
\right)
\end{align*}
is not integrable.

The integrability class $\operatorname*{ic}(M)=0$ in $\operatorname{H}%
_{\operatorname{Rin}}^{2}(\der_{k}(R),R)$ which means that $\nabla^{\prime}$
becomes integrable after removing terms of degree different from zero. In
fact, we see that this gives $\nabla.$

We also find that $\operatorname{H}_{\operatorname{Rin}}^{1}(\der_{k}%
(R),R)\cong k,$ which means that there is a one parameter family of integrable
connections on $M.$
\end{example}

\bibliographystyle{amsplain}
\bibliography{alggeo}

\end{document}